\newcommand{\R}{\mathds R}
\newcommand{\I}{\mathds 1}
\def\aa{\alpha}
\def\d{{\rm d}}
\def\<{\langle}
\def\>{\rangle}
 \def\ss{\sqrt}
\def\bb{\beta}
\def\R{\mathbb R}   \def\ss{\sqrt} 
 \def\kk{\kappa} 
  \def\vv{\varepsilon} 
\def\<{\langle} \def\>{\rangle}  \def\gg{\gamma}
  \def\nn{\nabla}  
\def\d{\text{\rm{d}}} \def\bb{\beta} \def\aa{\alpha} 
 \def\beq{\begin{equation}}  
\def\e{\text{\rm{e}}}    
 \def\P{\mathbb P}
  \def\ll{\lambda}
\def\i{{\rm in}}\def\E{\mathbb E} 
  \def\i{{\rm i}} 
\def\to{\rightarrow}
\def\8{\infty}\def\3{\triangle}
\def\1{\lesssim}
\renewcommand{\bar}{\overline}
\renewcommand{\tilde}{\widetilde}
\newtheorem{theorem}{Theorem}[section]
\newtheorem{lemma}[theorem]{Lemma}
\newtheorem{proposition}[theorem]{Proposition}
\theoremstyle{definition}
\newtheorem{example}[theorem]{Example}
\newtheorem{remark}[theorem]{Remark}
\numberwithin{equation}{section}
\begin{document}
\allowdisplaybreaks

\title[Ergodicity for damping Hamiltonian dynamics with non-local collisions] {Exponential ergodicity for  damping Hamiltonian dynamics with state-dependent and non-local collisions}

\author{
Jianhai Bao\qquad\,
\and\qquad
Jian Wang}
\date{}
\thanks{\emph{J.\ Bao:} Center for Applied Mathematics, Tianjin University, 300072  Tianjin, P.R. China. \url{jianhaibao@tju.edu.cn}}
\thanks{\emph{J.\ Wang:}
College of Mathematics and Statistic \&  Fujian Key Laboratory of Mathematical
Analysis and Applications (FJKLMAA) \&  Center for Applied Mathematics of Fujian Province (FJNU), Fujian Normal University, 350007 Fuzhou, P.R. China. \url{jianwang@fjnu.edu.cn}}

\maketitle

\begin{abstract}
 In this paper, we investigate  the exponential ergodicity in a Wasserstein-type distance for a damping Hamiltonian dynamics with state-dependent and non-local collisions, which indeed is a special case of piecewise deterministic Markov processes while
is very popular  in numerous modelling situations including stochastic algorithms. The approach adopted in this work is based on a combination of the refined basic coupling and the refined reflection coupling for non-local operators.
In a certain sense,  the main result developed in the present paper is a continuation of the  counterpart  in \cite{BW2022} on exponential ergodicity of stochastic Hamiltonian systems with L\'evy noises and a complement  of \cite{BA} upon exponential ergodicity for Andersen dynamics with constant jump rate functions.

\medskip

\noindent\textbf{Keywords:} damping Hamiltonian dynamics; non-local collision; exponential ergodicity; Wasserstein-type distance; coupling

\smallskip

\noindent \textbf{MSC 2020:} 60H10, 60J60, 60J76
\end{abstract}

\section{Introduction}\label{section1}

\subsection{Background}
Piecewise deterministic Markov processes (PDMPs for short) constitute a very natural class of non-diffusive stochastic processes, where the mathematical framework was built by   Mark H. A. Davis in \cite{Davis}.
Roughly speaking, the PDMP is a process which jumps at some random time and moves continuously between  two adjacent random times; see \cite{Davis-book, Jac} for more details.
According to  \cite[Section 3]{Davis}, the probability law of a PDMP with the state space $E$ is determined by the following three ingredients: (i) a vector field $\Xi$, generating a deterministic flow; (ii) a jump rate function $J:E\to[0,\8)$, giving  the law of the random times between jumps; (iii)  a jump measure $Q:E\times E\to(0,\8)$ (i.e., for each fixed $A\in \mathscr B(E)$, $E\ni x\mapsto Q(x,A)$  is a measurable function, and,  for each fixed $x\in E$, $ \mathscr B(E)\ni A\mapsto Q(x,A)$  is a probability measure), giving the transition probability kernel of its jumps. The class of PDMPs is more general than compound Poisson processes and basic queues, and includes also jump processes over vector fields. PDMPs have a great variety of
applications such as in biology (cellular mass), physics (polymers length), computer science (TCP window size process), reliability (workload and repairable systems), mathematical finance, to name a few;
see, for instance, an excellent comprehensive survey paper \cite{Mal}
 on recent progresses of PDMPs and related open problems.
 Understanding the ergodic properties of these models from all areas above, in particular the distance under which (or the rate at which) they stabilize towards equilibrium, has in
turn increased the interest in the long-time behavior of PDMPs; see \cite{BBMZ, CoD, Cz, DGM} and references therein for the recent study.

In this paper, we consider a special class of PDMPs $(X_t,V_t)_{t\ge0}$ on the state space
$  \R^{2d}: =\R^d\times\R^d$ and associated with the following infinitesimal generator
\begin{equation}\label{E0}
\begin{split}
\big(\mathscr L f\big)(x,v)&=\big(\<\nn_x f(x,v),  v\>-\<\nn_vf(x,v), \gamma v+ \nn U(x)\>\big)\\
&\quad+J(x,v) \int_{\R^d}\big(f(x,u)-f(x,v)\big)\varphi(u)\,\d u\\
&=:(\mathscr L_{1,\gamma} f)(x,v)+(\mathscr L_2 f)(x,v),\quad\quad f\in C_b^{1}(\R^{2d}),
\end{split}
\end{equation}
where $\gamma>0,$ $U :\R^{ d}\to\R $ is smooth, $J:\R^{2d}\to(0,\8)$, and $\varphi(\cdot)$, which is
radial (i.e., $\varphi(x)=\varphi(|x|)$ for all $x\in \R^d$), is a probability density function on $\R^d$. In \eqref{E0},  $C_b^1(\R^{2d})$ means  the collection of  bounded real-valued functions $f(x,v)$ on $\R^{2d}$, which are   differentiable in $x$ and $v$, respectively, and $\nn_xf(x,v)$ and  $\nn_vf(x,v)$  denote  the first order gradients  of $f(x,v)$ with respect to the variable $x$ and the variable $v$, respectively.

Now, we make some detailed  expositions on the quantities involved in \eqref{E0}.
More precisely, $( v,-\gamma v-\nn U(x ))$ is the vector field generating the damping Hamiltonian flow,
 where $\gamma$  means the friction intensity that
 ensures a  damped-driven Hamiltonian  and
$-\gamma v$ stands for the damping force;
$J:\R^{2d}\to (0,\8)$ is the jump rate; $\varphi(u)\,\d u$ represents the jump measure. In terminology,
$\mathscr L_{1,\gamma} $ is called the Liouville operator associated with the damping Hamiltonian flow generated by the vector field $(x,-\gamma v-\nn U(x))$, and
$\mathscr L_2$ is the so-called
non-local collision operator. In particular, if $\varphi(u)$ is the density function of the standard normal distribution and $J(x,v)=\lambda$ for all $x,v\in \R^d$,   $\mathscr L_2$ is called the complete momentum randomization operator; see, for example, \cite{BS}.
It is worthy to emphasize that,
in statistical physics, the damping Hamiltonian system  has been applied widely  to model many vibration phenomena (e.g., the generalized Duffing oscillator); see e.g. \cite{Talay,Wu}. In the past two decades, great progresses upon long term behaviors  (e.g., ergodicity and large deviation) have been made for stochastic damping Hamiltonian systems; see, for instance, \cite{CLP,EGZ,LM,MSH} and references within for more details.

\subsection{Main result}
The purpose of this paper is to study the exponential ergodicity of the PDMP $(X_t,V_t)_{t\ge0}$ whose generator $\mathscr{L}$ is given by \eqref{E0}.  Before we  state our main result, we first present the assumptions. First of all, we assume that
\begin{enumerate}

\item[(${\bf H_0}$)]\it For any $\bb\in\R$, there exists a constant $K_{\bb,U}>0$ such that
for all $x,x'\in\R^d,$
\begin{equation*}
|\bb(x-x')+\nn U(x' )-\nn U(x )|\le K_{\bb,U} |x-x'|.
\end{equation*}

\end{enumerate}
\noindent In particular, $\nn U$ is Lipschitz continuous under ${\bf(H_0)}$.

For  the jump rate $J$ and the probability density  $\varphi$ of the jump measure, we assume  that
\begin{enumerate}
\item[(${\bf A_1}$)] {\it $J:\R^{2d}\to (0,\8)$ is uniformly bounded between two positive constants, i.e., there exist constants $\lambda_1,\lambda_2>0$ such that $\lambda_1\le J(x,v)\le \lambda_2$ for all $(x,v)\in\R^{2d}$.
Moreover, $J$ is globally Lipschitz continuous on $\R^{2d}$, i.e., there exists  a constant $\lambda_J>0$ such that for all $(x,v),(x',v')\in \R^{2d}$,
\begin{equation}\label{J1}
|J(x,v)-J(x',v')|\le \lambda_J\big(|x-x'|+|v-v'|\big).
\end{equation}}
\item[(${\bf A_2}$)] {\it For any $\alpha, \kappa>0$,
there exist $c_*(\alpha, \kappa), c^*(\alpha,\kk)>0$  such that for all $z\in \R^d$,
\begin{equation} \label{e12}
c_*( \alpha,\kappa)\le A_{\alpha,\kk}(z):=\int_{\R^d} \psi_{\alpha (z)_\kk }(u)\,\d u  \quad \mbox{ and } \quad 1- A_{\alpha,\kk}(z) \le   c^{*}(\alpha,\kk)|z|,
\end{equation}
where  for all $\xi,u\in\R^d$,
\begin{equation}\label{S1}
\psi_\xi(u):=\varphi(u)\wedge\varphi (u+\xi
),\end{equation}
and,
for the threshold $\kk>0$, the truncation counterpart of $z\in\R^d$ is defined by
\begin{equation}\label{S2}
 (z)_\kk=
  \frac{(\kk\wedge |z|)z}{|z|} \I_{\{z\neq{\bf0}\}}+{\bf0}\I_{\{z={\bf0}\}}.
\end{equation}}

\end{enumerate}
Since $A_{\alpha,\kappa}(0)=\int_{\R^d} \psi_0(u)\,\d u=\int_{\R^d}\varphi(u)\,\d u=1$, in some sense \eqref{e12} indicates the non-degenerate property and the continuity of the probability density  $\varphi$.

Besides all the assumptions above, we further need the following Lyapunov  condition:
\begin{itemize}
\item[(${\bf B_1}$)] {\it There exist a $C^1$-function $\mathcal W:\R^{2d}\to[1,\8)$
 and constants $c_0,C_0>0$ such that
\begin{equation}\label{EEE}
\lim_{|x|+|v|\to \infty} \mathcal W(x,v)=\infty
\end{equation} and  for all $(x,v)\in\R^{2d}$,
\begin{equation}\label{E1}
(\mathscr L \mathcal W)(x,v)\le-c_0 \mathcal W(x,v)+C_0.
\end{equation}}

\item[(${\bf B_2}$)] {\it There exists a constant $c^{**}>0$ such that for all $x,\xi\in\R^d,$
\begin{equation}\label{e11}
\int_{\R^d}\mathcal W(x,u)\varphi(u)\,\d u\le c^{**}\inf_{v\in\R^d}\mathcal W(x,v ),\quad \int_{\R^d}\mathcal W(x,u)\,\Psi_{\xi}(u)\, \d u \le c^{**}\inf_{v\in\R^d}\mathcal W(x,v ) |\xi|,
\end{equation}
where for all $\xi,u\in\R^d$,
\begin{equation}\label{S3}
\Psi_\xi(u):=\varphi(u)-\psi_\xi(u)
\end{equation}
with $\psi_\xi(u)$ being introduced in \eqref{S1}.}
\end{itemize}

Let $\mathscr P(\R^{2d})$ be the set of probability measures on $\R^{2d}$. For $\mu,\nu\in\mathscr P(\R^{2d})$, define the quasi-Wasserstein distance between $\mu$ and $\nu$ induced by a distance-like function $\Phi:\R^{2d}\times\R^{2d}\to[0,\8)$ (see e.g. \cite[Definition 4.3]{HMS}) as below
$$
  W_\Phi(\mu,\nu)=\inf_{\Pi\in \mathscr{C}(\mu,\nu)}\int_{\R^{2d}\times\R^{2d}} \Phi(x,y)\, \Pi(\d x,\d y),
$$
where $\mathscr{C}(\mu,\nu)$ stands for the collection of all couplings of $\mu$ and $\nu.$ In particular,  $W_\Phi$ goes back to the classical  Wasserstein distance when $\Phi$ is a metric function.
Note that $W_\Phi(\mu,\nu)=0$ if and only if $\mu=\nu $, since $\Phi$ is a distance-like function. Moreover, the space
\begin{equation*}
\mathscr P_\Phi(\R^{2d}):=\Big\{\mu\in \mathscr P(\R^{2d}):\int_{\R^{2d}}\Phi(x,{\bf 0})\,\mu(\d x)<\8\Big\}
\end{equation*}
is complete under  $W_\Phi$, i.e., each $W_\Phi$-Cauchy sequence in $\mathscr P_\Phi(\R^{2d})$ 
converges with respect to $W_\Phi$.

For each $t\ge0$, let $P_t\big((x,v),\cdot\big)$ be the transition probability kernel of the Markov process $(X_t,V_t)_{t\ge0}$ with initial value $(X_0,V_0)=(x,v)$ associated with the generator $\mathscr L. $   Furthermore, we shall write $\mu P_t$ to mean the distribution of $(X_t,V_t) $ with   initial distribution $\mu\in\mathscr P(\R^{2d})$.

 The main result of this paper is stated as follows.

\begin{theorem}\label{main}
Assume that $( {\bf H_0})$, $( {\bf A_1})$, $( {\bf A_2})$, $( {\bf B_1})$ and $( {\bf B_2})$ hold, and that  the following inequality
  \begin{equation}\label{S6}
  \bb\ge 4K_{\bb,U}
 \end{equation}
 is solvable in the interval $(0,\gamma^2/4]$, where $\gamma$ was given in \eqref{E0} and $K_{\bb,U}$ was given in $({\bf H_0})$.
  Then,  the PDMP $(X_t,V_t)_{t\ge0}$ corresponding to the operator $\mathscr L$ in \eqref{E0} is exponentially ergodic in the sense that there exist a unique invariant probability measure $\mu\in\mathscr P_\Phi(\R^{2d})$
and a constant $\lambda^*>0$ such that for any $\nu\in\mathscr P_\Phi(\R^{2d})$ and $t\ge0,$
\begin{equation}\label{e:main-1}
W_{\Phi}\big(\nu P_t,\mu\big)\le C(\mu,\nu)\e^{-\lambda^*t},
\end{equation}
where for all $(x,v),(x',v')\in\R^{2d}$,
\begin{equation}\label{e:main-2}
\Phi\big((x,v),(x',v')\big):=\big((|x-x'|+|v-v'|)\wedge 1\big)\big(\mathcal W(x,v)+\mathcal W(x',v')\big)
\end{equation} and $C(\mu,\nu)$ is a positive   function
depending on $\mu$ and $\nu$ $($indepedent of $t)$.
\end{theorem}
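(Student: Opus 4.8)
The plan is to establish an exponential contraction for a carefully chosen semimetric along an explicit coupling of two copies of the PDMP, and then to upgrade it to the weighted quasi-distance $W_\Phi$ by feeding in the Lyapunov function $\mathcal W$ via a Harris-type argument. First I would construct a Markov coupling $\big((X_t,V_t),(X_t',V_t')\big)_{t\ge0}$ of the process as follows: between collisions both marginals run the \emph{same} damped Hamiltonian flow generated by $(v,-\gamma v-\nabla U(x))$, i.e.\ the synchronous coupling of the Liouville part $\mathscr L_{1,\gamma}$; the collisions are driven by a single Poisson clock of rate $\lambda_2$, at each ring of which the first (resp.\ second) marginal actually jumps with probability $J(X_{t-},V_{t-})/\lambda_2$ (resp.\ $J(X_{t-}',V_{t-}')/\lambda_2$), the disagreement of these two acceptance events being controlled by \eqref{J1} and $\lambda_1>0$ of $({\bf A_1})$; and on the event that both marginals jump at once the pair of new velocities $(u,u')$ is sampled from a coupling of $\varphi(\cdot)\,\d u$ with itself built, in the spirit of the refined reflection and refined basic couplings for non-local operators, so that with probability $A_{\alpha,\kappa}(x-x')=\int_{\R^d}\psi_{\alpha(x-x')_\kappa}(u)\,\d u\ge c_*(\alpha,\kappa)$ the post-collision velocity difference $V_t-V_t'$ equals the reflection vector $-\alpha(x-x')_\kappa$ (collinear with the position difference, of length $\alpha(\kappa\wedge|x-x'|)$), while the residual part — carrying the total mass $1-A_{\alpha,\kappa}(x-x')\le c^*(\alpha,\kappa)|x-x'|$ — is handled by an auxiliary coupling, the basic-coupling component of which secures the minorization/small-set property used later. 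Here $\alpha,\kappa>0$ are free tuning constants and $(\cdot)_\kappa$ is the truncation \eqref{S2}, whose role is precisely to keep $\alpha(x-x')_\kappa$ bounded so that $\psi_{\alpha(x-x')_\kappa}$ carries a uniformly positive mass.

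Next, following Eberle-type constructions I would work with a semimetric $\rho\big((x,v),(x',v')\big)=f\big(|x-x'|+a\,|x-x'+b(v-v')|\big)$, where $a,b>0$ and $f$ is bounded, increasing, concave with $f(0)=0$, and the constants $a,b$ together with the slope of $f$ near $0$ are chosen in terms of $\gamma$ and of the $\beta\in(0,\gamma^2/4]$ that solves \eqref{S6}. The inequality \eqref{S6} enters exactly here: for such a $\beta$ the model linear flow $\dot Z=W$, $\dot W=-\gamma W-\beta Z$ is overdamped, so $|x-x'|+a|x-x'+b(v-v')|$ is well behaved along it, and the estimate $|\beta(x-x')-(\nabla U(x)-\nabla U(x'))|\le K_{\beta,U}|x-x'|$ from $({\bf H_0})$ together with $\beta\ge 4K_{\beta,U}$ keeps the genuinely nonlinear part of the deterministic flow from expanding this quantity by more than a controlled amount between collisions. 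The actual contraction is delivered at the collisions: when the reflection move of the first step fires, $v-v'$ is replaced by $-\alpha(x-x')_\kappa$, which (for suitable $a,b,\alpha$) strictly decreases the argument of $f$, and by concavity of $f$ this yields a definite drop occurring at rate $\ge\lambda_1 c_*(\alpha,\kappa)$ off the diagonal, the error from the residual mass and the rate mismatch being absorbed via $1-A_{\alpha,\kappa}\le c^*(\alpha,\kappa)|\cdot|$ and \eqref{J1}. After fixing the free constants this should give $\big(\widetilde{\mathscr L}\rho\big)\big((x,v),(x',v')\big)\le -c_1\,\rho\big((x,v),(x',v')\big)$ for some $c_1>0$ and all $(x,v)\ne(x',v')$, $\widetilde{\mathscr L}$ being the generator of the coupling.

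Since $\rho$ is bounded, this alone yields only a total-variation-type contraction on compacts, so I would combine it with $\mathcal W$ to reach $W_\Phi$. Writing $G:=\mathcal W(x,v)+\mathcal W(x',v')$, condition $({\bf B_1})$ gives $\widetilde{\mathscr L}G\le -c_0 G+2C_0$ for the coupled generator, and $({\bf B_2})$ — the two bounds in \eqref{e11} involving $\varphi$ and $\Psi_\xi$ — is exactly what guarantees that the collision part of the coupling, including the reflection moves $v\mapsto u$, $v'\mapsto u+\alpha(x-x')_\kappa$, does not inflate $G$ beyond a constant multiple of $\inf_w\mathcal W(x,w)+\inf_{w'}\mathcal W(x',w')$, the extra reflection contribution being moreover proportional to $\alpha(\kappa\wedge|x-x'|)$. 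Because $f(r)\asymp r\wedge1$, one has $\rho\cdot G\asymp\Phi$ with $\Phi$ as in \eqref{e:main-2}, and a by-now-standard interpolation argument then produces $\widetilde{\mathscr L}\Phi\le-\lambda^*\Phi$ for some $\lambda^*>0$: where $G$ is large the term $-c_0 G$ dominates, where $G$ is bounded and $\rho$ is bounded below the term $-c_1\rho$ dominates, and where both are small one uses the uniformly positive collision rate $\lambda_1 c_*(\alpha,\kappa)$ together with $({\bf B_2})$. This gives $\E\big[\Phi\big((X_t,V_t),(X_t',V_t')\big)\big]\le\e^{-\lambda^* t}\Phi\big((x,v),(x',v')\big)$ for all $t\ge0$; applying it to the coupling started from $\nu$ and from $\nu P_s$ (using $({\bf B_1})$ to propagate finiteness of the $\mathcal W$-moment, e.g.\ from Dirac initial data) shows $W_\Phi(\nu P_t,\nu P_{t+s})\le C(\nu)\e^{-\lambda^* t}$, so $(\nu P_t)_{t\ge0}$ is $W_\Phi$-Cauchy in $\mathscr P_\Phi(\R^{2d})$; by the stated completeness it converges to some invariant $\mu\in\mathscr P_\Phi(\R^{2d})$, and uniqueness and \eqref{e:main-1} follow from the same contraction, with $C(\mu,\nu)$ of the form $c\int_{\R^{2d}}\mathcal W\,\d(\mu+\nu)$.

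The crux is the second step: one must design the collision coupling (the mixture of refined basic and refined reflection couplings, the truncation threshold $\kappa$ and the parameter $\alpha$) and the twisted semimetric $\rho$ \emph{simultaneously}, so that the drop gained at the collisions genuinely beats the possible expansion of $\rho$ along the non-convex deterministic flow between collisions; this is where \eqref{S6} — i.e.\ $\beta\ge 4K_{\beta,U}$ being solvable in $(0,\gamma^2/4]$ — must be used to its full strength, and where the sharp bound $1-A_{\alpha,\kappa}(z)\le c^*(\alpha,\kappa)|z|$ is indispensable to absorb the error terms as $(x,v)\to(x',v')$. Handling the state dependence of the jump rate $J$ in the coupling via \eqref{J1}, and verifying the interpolation of the third step near the diagonal, are the remaining technical points.
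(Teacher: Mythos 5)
Your proposal is correct and takes essentially the same route as the paper: synchronous coupling of the Liouville part, a mixture of refined basic, refined reflection and independent couplings for the collision operator with the state-dependence of $J$ handled via \eqref{J1}, a bounded concave Eberle-type function of the skewed distance $\alpha_0|x-x'| + |x-x'+\alpha^{-1}(v-v')|$, multiplication by the Lyapunov weight $G=1+\vv\,\mathcal W(x,v)+\vv\,\mathcal W(x',v')$, and a case split on the magnitudes of the distance and of $\mathcal W$. One minor overstatement worth flagging: a pure contraction $\tilde{\mathscr L}\rho\le -c_1\rho$ for \emph{all} $(x,v)\neq(x',v')$ cannot hold, since $f$ is bounded with decaying derivative, so at large $r$ the Liouville drift $f'(r)\tilde{\mathscr L}_1 r$ cannot dominate $-c_1 f(r)$; the paper accordingly truncates $f$ at the level $R_0$, settles for $\tilde{\mathscr L}\tilde F\le 0$ on $\{r\ge R_0\}$, and lets the Lyapunov drift of $G$ supply the negative term there — which is precisely the combination with $\mathcal W$ you carry out in your third step, so the discrepancy is cosmetic rather than structural.
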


To illustrate the effectiveness of Theorem \ref{main}, we consider the following example.
\begin{example}\label{Example}Assume that Assumption ${\bf (A_1)}$ holds. Let $U(x)=\theta |x|^2$ with
$$ \frac{\gamma^2}{8}\ge \theta>
\frac{(\lambda_1+\gamma)^2(\lambda_2-\lambda_1)^2}{4(2\lambda_1\lambda_2-\lambda_1^2
+4\lambda_2\gamma+3\gamma^2)},$$ and $\varphi(x)=\varphi_1(x):=c_{d,\beta_1} (1+|x|)^{-d-\beta_1}$ with $\beta_1>0$ or $\varphi(x)=\varphi_2(x):= c_{d,\beta_2} \exp (-|x|^{\beta_2})$ with $\beta_2>0$. Then, the conclusion of Theorem \ref{main} holds with $\mathcal W(x,v)=(1+|x|^2+|v|^2)$ 
and
the previously defined $\varphi_2$ or $\varphi_1$ when $\beta_1>2$,
 and with $\mathcal W(x,v)=(1+|x|^2+|v|^2)^{(\beta_1-\varepsilon)/2}$ for any $\varepsilon\in(0,\bb_1)$
and the foregoing $\varphi_1$ when $\beta_1\in (0,2]$.
 \end{example}
\subsection{Comments} Recently, plenty  of interests have
grown concerning  the application of PDMPs to sample from a target distribution (for example, algorithms are referred to as PDMP
Monte Carlo (PDMP-MC) methods). Therefore, much more efforts are devoted to
the ergodicity and the other long time behaviour of the PDMPs; see e.g. \cite{AD,BG,DGM,FBK,LT}     and references therein. Our work is related to the existing  result \cite{BS}
and the recent one \cite{BA}. In \cite{BS}, the exponential ergodicity for a randomized Hamiltonian Monte
Carlo (also called Hybrid Monte Carlo) was treated  under the same conditions that imply geometric ergodicity of the solution to underdamped Langevin equations.
The proof of \cite{BS} is based on a Foster–Lyapunov drift condition, a minorization condition and Harris' theorem. Via a coupling approach, the convergence to
equilibrium of Andersen dynamics (which becomes exact randomized Hamiltonian Monte Carlo when the associated molecular system consists of only one particle) was handled in \cite{BA}.
 As in \cite{BA,BA-add},
 we herein
also adopt  the probabilistic coupling method, whereas  the setting is significantly different from those in \cite{BA,BS}. For example,
\begin{itemize}
\item[(i)] The jump rate function in \cite{BA,BA-add,BS} is a constant function and moreover the jump measure is the standard normal distribution, though the non-local collisions involved in the PDMPs in \cite{BA,BA-add,BS} are much more general.
     Moreover, the
  exponential ergodicity in a Wasserstein sense of Andersen dynamics  was addressed in \cite{BA} nevertheless the position component was confined in a high-dimensional torus.  According to the private communications with Nawaf Bou-Rabee, the issue on  ergodicity of Andersen dynamics, where not only the velocity component but also the position component are supported on the whole Euclidean space, is highly non-trivial. Furthermore, we would like to emphasize that the exponential ergodicity of Andersen dynamics surviving  on the whole Euclidean space was investigated in \cite{BA-add},
 where the semi-metric inducing the Wasserstein-type distance admits the following form: for all $(x,v),(x',v')\in\R^{2d}$,
  \begin{equation*}
  \Phi\big((x,v),(x',v')\big)=\big((|x-x'|+|v-v'|)\wedge 1\big)\big(|x-x'|^2+|v-v'|^2\big)
  \end{equation*}
while the counterpart designed  in Theorem \ref{main} is a multiplicative type distance-like function (see \eqref{e:main-2} for more details) so the quasi-metric  involved in \cite{BA-add} is essentially different from the one  we exploited in Theorem \ref{main}.
 \item[(ii)] In the present paper, most importantly, we focus on  the state-dependent jump rate function. Additionally, we can not only deal with (sub-)Gaussian probability measures but also  the probability measures with heavy tails such like $\varphi(u)\,\d u=c_{d,\beta} (1+|u|)^{-d-\beta}\,\d u$ for $\beta>0$. Due to the appearance of the state-dependent jump rate function, compared with \cite{BA,BA-add,BS}, some additional sacrifices need to be paid.  Throughout the paper,
    the price to pay is that we will require that the constant $\gamma$ in the operator \eqref{E0} is positive; that is, we merely work on the damping Hamiltonian flow in our paper.
    On the other hand, the main results in \cite{BA,BA-add} require the constant jump rate function $J(x,v)$ is large enough while here in Theorem \ref{main} we do not need such kind condition even for the setting of non-constant jump rate functions.
    Therefore, from these points of view above, the results of \cite{BS,BA} and our paper  complement  each other.
\end{itemize}

The approach of our paper is also motivated partly by our previous work \cite{BW2022} on  exponential ergodicity of stochastic Hamiltonian systems with L\'evy noises.
However, in contrast to  \cite{BW2022} the non-local collision operator in the present setting is not only highly degenerate but also   state-dependent so much more delicate work  are to be implemented.  In particular, we shall adopt a combination of the refined basic coupling and the refined reflection coupling (rather than the refined basic coupling exploited merely in \cite{BW2022}) in order to include  more general probability measures (e.g.,
(sup-)Gaussian or (sub-)Gaussian probability
measures and probability measures with heavy tails). So, in a certain sense, Theorem \ref{main} is a continuation of the corresponding main result in \cite{BW2022} on exponential ergodicity of stochastic Hamiltonian systems with L\'evy noises. Furthermore, we emphasize that
the process under investigation in this paper
has some essentially different properties from stochastic Hamiltonian systems with L\'evy noises under consideration in \cite{BW2022}. For example, under some regular conditions the process associated with stochastic Hamiltonian systems with L\'evy noises can possess the strong Feller property; see \cite{Zhang}. Nonetheless, since the non-local collision operator $\mathscr L_2$ in \eqref{E0} is a bounded operator on $B_b(\R^{2d})$ under Assumption ${\bf (A_1)}$,  the PDMP $(X_t,V_t)_{t\ge0}$ corresponding to the operator $\mathscr L$ in \eqref{E0} can never enjoy the strong Feller property.

\ \

The rest of the paper is arranged as follows. In the next section, we construct a coupling operator and examine  the existence of the associated coupling process. Section \ref{Section3} is devoted to the proof of Theorem \ref{main}. In the last section, we present some sufficient conditions to guarantee that   Assumptions and the technical condition \eqref{S6} involved in  Theorem \ref{main} are verifiable.

\section{Coupling operator and coupling process}\label{Section2}
We start with some notations.
Let $I_{d\times d}$ be the $d\times d$ identity matrix, and $\R^d\otimes\R^d$ be the set of all $d\times d$ matrices. For $x\in\R^d,$
we write $x\otimes x =xx^*\in\R^d\otimes\R^d$ with  $x^*$ being its transpose.
For   $x\in\R^d,$ define the following
orthogonal matrix
\begin{equation}\label{EE1}
\Pi_{x}=
\Big(I_{d\times d}-2\Big(\frac{x}{|x|}\otimes\frac{x}{|x|}\Big)\Big)\I_{\{x\neq{\bf0}\}}-I_{d\times d} \I_{\{x={\bf0}\}}\in\R^d\otimes\R^d.
\end{equation}
For any $a,b\in\R$, let $a^+=\max\{a,0\}$, i.e., the positive part of the number $a$, and $a\wedge b=\min\{a,b\}$.

Fix $\alpha, \kappa>0$. For ${\bf y}=\big((x,v),(x',v')\big)\in\R^{4d} $ and $f\in C_b^{1}(\R^{4d})$, define the following operator
\begin{equation}\label{E2}
\big(\tilde{\mathscr L}_{\gamma,\alpha, \kappa}f\big) ({\bf y})=\big(\tilde{\mathscr L}_{1,\gamma}f\big)({\bf y})+\big(\tilde{\mathscr L}_{2,\alpha,\kappa}f\big)({\bf y}),
\end{equation}
where
\begin{equation}\label{W4}
\begin{split}
\big(\tilde{\mathscr L}_{1,\gamma}f\big)({\bf y}):&=\big\<\nn_x f({\bf y}), v\big\>+\big\<\nn_{x'} f({\bf y}), v'\big\>\\
&\quad-\big\<\nn_v f({\bf y}),  \gamma v+\nn U(x)\big\>-\big\<\nn_{v'} f({\bf y}),  \gamma v'+\nn U(x')\big\>
\end{split}
\end{equation}
and
\begin{equation}\label{W55}
\begin{split}
\big(\tilde{\mathscr L}_{2,\alpha, \kappa}f\big)({\bf y})
&:= \big(J(x,v)\wedge J(x',v')\big)\\
&\qquad\times\bigg\{\int_{\R^d}\big(f\big((x,u),(x',u+\alpha(x-x')_\kk
)\big)-f({\bf y})\big)\psi_{\alpha(x-x')_\kk }(u)\,  \d u\\
&\quad+   \int_{\R^d}\big(f\big((x,u),(x',\Pi_{ (x-x')_\kk }u
)\big)-f({\bf y})\big)\Psi_{\alpha(x-x')_\kk }(u)\, \d u\bigg\}\\
&\quad+\big(J(x,v)  -  J(x',v')\big)^+ \int_{\R^d}\big(f\big((x,u),(x',v'
)\big)-f({\bf y})\big)\psi(u)\,  \d u\\
&\quad  + \big(J(x',v')  -  J(x,v)\big)^+\int_{\R^d}\big(f\big((x,v),(x',u
)\big)-f({\bf y})\big) \psi(u)\,  \d u,
\end{split}
\end{equation}
where, for $z\in\R^d$, $(z)_\kk$ was defined as in \eqref{S1}, and $\psi_\xi(\cdot)$ and $\Psi_\xi(\cdot)$
were introduced in \eqref{S2} and \eqref{S3}, respectively.
 It is easy to see that the last two items on the right hand side of \eqref{W55} vanish once the jump rate $J$ is a constant function (i.e., $J(x,v)=\lambda$ for all $(x,v)\in\R^{2d}$ and some $\lambda>0$).

\begin{remark}  As shown in Lemma \ref{lem0} below, for any $\gamma,\alpha,\kappa>0$, $\tilde{\mathscr L}_{\gamma,\alpha,\kappa}$ is a coupling operator of $\mathscr L$. Indeed, for the operator $ \mathscr L_{1,\gamma} $, we adopt the synchronous coupling as showed in \eqref{W4}.
The coupling operator $\tilde{\mathscr L}_{2 ,\alpha,\kappa}$ associated with $ \mathscr L_2 $ is indeed built based on a combination of the refined basic coupling and the refined reflection coupling as well as the independent coupling:
\begin{equation*}
\big((x,v),(x',v')\big)\rightarrow
\begin{cases}
\big((x,u),(x',u+\alpha(x-x')_\kk
)\big),\quad&\big(J(x,v)\wedge J(x',v')\big) \psi_{\alpha(x-x')_\kk}(u)\,  \d u\\
\big((x,u),(x',\Pi_{ (x-x')_\kk }u)\big),  & \big(J(x,v)\wedge J(x',v')\big)\Psi_{\alpha(x-x')_\kk}(u)\,  \d u\\
\big((x,u),(x',v')\big),&\big(J(x,v)  -  J(x',v')\big)^+\varphi(u)\,\d u\\
\big((x,v),(x',u)\big), & \big(J(x',v')  -  J(x,v)\big)^+\varphi(u)\,\d u.
\end{cases}
\end{equation*} See e.g. \cite{LSW} or \cite{LW} for more details.
In particular, concerning  the coupling counterpart of $\tilde{\mathscr L}_{2,\alpha,\kappa}$, the velocity components change accordingly while
the position components remain unchanged. More precisely, the velocity component $(v,v')$ changes into $(u,u+\alpha(x-x')_\kk)$ with the maximum common intensity measure $ \big(J(x,v)\wedge J(x',v')\big) \psi_{\alpha(x-x')_\kk}(u)\,  \d u$;  the velocity component $(v,v')$ moves to the   point $(u,\Pi_{ (x-x')_\kk }u)$ with the  intensity measure $\big(J(x,v)\wedge J(x',v')\big)\Psi_{\alpha(x-x')_\kk}(u)\,  \d u$; the velocity component $(v,v')$ changes  to $(u,v')$ and $(v,u)$ with the remainder intensity measures $\big(J(x,v)  -  J(x',v')\big)^+\varphi(u)\,\d u$ and $\big(J(x',v')  -  J(x,v)\big)^+\varphi(u)\,\d u$, respectively, to guarantee the marginal property of the coupling operator $\tilde{\mathscr L}_{2 ,\alpha,\kappa}$ defined by \eqref{W55}.  Moreover, it is worthy to
 stress that the construction above heavily depends on the
 radial property of $\varphi$.
\end{remark}

\begin{lemma}\label{lem0}
For any $\gamma,\alpha, \kappa>0$, the operator $\tilde{\mathscr L}_{\gamma,\alpha, \kappa}$, defined in \eqref{E2}, is a coupling operator of $\mathscr L$, introduced in \eqref{E0}.
\end{lemma}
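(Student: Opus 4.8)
The plan is to verify the defining property of a coupling operator directly: for every $g\in C_b^1(\R^{2d})$ and every ${\bf y}=\big((x,v),(x',v')\big)\in\R^{4d}$, setting $f_1({\bf y}):=g(x,v)$ and $f_2({\bf y}):=g(x',v')$, I would show
$$
\big(\tilde{\mathscr L}_{\gamma,\alpha,\kappa}f_1\big)({\bf y})=(\mathscr L g)(x,v)\et \big(\tilde{\mathscr L}_{\gamma,\alpha,\kappa}f_2\big)({\bf y})=(\mathscr L g)(x',v').
$$
Since $\mathscr L=\mathscr L_{1,\gamma}+\mathscr L_2$ and $\tilde{\mathscr L}_{\gamma,\alpha,\kappa}=\tilde{\mathscr L}_{1,\gamma}+\tilde{\mathscr L}_{2,\alpha,\kappa}$ both split additively, it suffices to match \eqref{W4} with $\mathscr L_{1,\gamma}$ and \eqref{W55} with $\mathscr L_2$ separately. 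The Liouville part is immediate: for $f_1$ one has $\nn_xf_1=\nn_xg$, $\nn_vf_1=\nn_vg$ and $\nn_{x'}f_1=\nn_{v'}f_1={\bf0}$, so \eqref{W4} reproduces $(\mathscr L_{1,\gamma}g)(x,v)$ term by term, and symmetrically for $f_2$; no hypothesis is needed here, this being just the synchronous coupling.

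For the collision part with $f=f_1$, the key observation is that every term of \eqref{W55} in which $f_1$ is evaluated at a pair with first slot $(x,u)$ equals $g(x,u)$, while $f_1({\bf y})=g(x,v)$. I would then: combine the first two integrals via $\psi_{\alpha(x-x')_\kappa}(u)+\Psi_{\alpha(x-x')_\kappa}(u)=\varphi(u)$ (from \eqref{S3}) into $\big(J(x,v)\wedge J(x',v')\big)\int_{\R^d}\big(g(x,u)-g(x,v)\big)\varphi(u)\,\d u$; keep the third integral as $\big(J(x,v)-J(x',v')\big)^+\int_{\R^d}\big(g(x,u)-g(x,v)\big)\varphi(u)\,\d u$ (reading the weight $\psi$ in \eqref{W55} as $\varphi$, in line with the Remark); and note that the fourth integral vanishes since $f_1\big((x,v),(x',u)\big)=g(x,v)=f_1({\bf y})$. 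The elementary identity $a\wedge b+(a-b)^+=a$ applied to the prefactors then gives $J(x,v)\int_{\R^d}\big(g(x,u)-g(x,v)\big)\varphi(u)\,\d u=(\mathscr L_2g)(x,v)$.

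The second marginal is where the real content lies. For $f=f_2$ the third integral of \eqref{W55} vanishes ($f_2\big((x,u),(x',v')\big)=g(x',v')=f_2({\bf y})$) and the fourth becomes $\big(J(x',v')-J(x,v)\big)^+\int_{\R^d}\big(g(x',u)-g(x',v')\big)\varphi(u)\,\d u$. In the first integral $f_2\big((x,u),(x',u+\alpha(x-x')_\kappa)\big)=g\big(x',u+\alpha(x-x')_\kappa\big)$, so I would substitute $w=u+\alpha(x-x')_\kappa$ and use $\psi_{\xi}(w-\xi)=\varphi(w-\xi)\wedge\varphi(w)=\psi_{-\xi}(w)$ to rewrite it with weight $\psi_{-\alpha(x-x')_\kappa}(w)$. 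In the reflection integral $f_2\big((x,u),(x',\Pi_{(x-x')_\kappa}u)\big)=g\big(x',\Pi_{(x-x')_\kappa}u\big)$; writing $z=(x-x')_\kappa$ and recalling from \eqref{EE1} that $\Pi_z$ is an orthogonal involution with $\Pi_zz=-z$, the substitution $w=\Pi_zu$ (Jacobian $1$) and the radiality of $\varphi$ yield $\varphi(\Pi_zw)=\varphi(w)$ and $\varphi(\Pi_zw+\alpha z)=\varphi\big(\Pi_z(\Pi_zw+\alpha z)\big)=\varphi(w-\alpha z)$, hence $\Psi_{\alpha z}(\Pi_zw)=\varphi(w)-\varphi(w)\wedge\varphi(w-\alpha z)=\Psi_{-\alpha z}(w)$, rewriting this integral with weight $\Psi_{-\alpha(x-x')_\kappa}(w)$. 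Combining the first two via $\psi_{-\xi}+\Psi_{-\xi}=\varphi$, adding the fourth, and applying $a\wedge b+(b-a)^+=b$ to the prefactors gives $J(x',v')\int_{\R^d}\big(g(x',w)-g(x',v')\big)\varphi(w)\,\d w=(\mathscr L_2g)(x',v')$, which completes the verification.

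I expect the only genuinely delicate point to be the reflection change of variables: the identity $\varphi(\Pi_zw+\alpha z)=\varphi(w-\alpha z)$ is precisely where the radial assumption on $\varphi$ and the Householder structure $\Pi_zz=-z$ of \eqref{EE1} are used, and everything else is bookkeeping — tracking which coordinate slot each of the four terms of \eqref{W55} depends on, plus the two one-line arithmetic identities $a\wedge b+(a-b)^+=a$ and $a\wedge b+(b-a)^+=b$. Finiteness of all integrals is automatic since $\varphi,\psi_\xi,\Psi_\xi\le\varphi$ with $\varphi$ a probability density and $g$ bounded, and the degenerate case $x=x'$ (where $z={\bf0}$ and $\Pi_{\bf0}=-I_{d\times d}$) needs no separate treatment because there $\Psi_{\bf0}\equiv0$.
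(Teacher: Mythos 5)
Your proposal is correct and follows essentially the same route as the paper's proof: both verify the marginal property by combining the refined basic and reflection pieces via $\psi_\xi+\Psi_\xi=\varphi$, both handle the second marginal by a change of variables ($u\mapsto u+\alpha(x-x')_\kappa$ for the shifted term, $u\mapsto\Pi_{(x-x')_\kappa}u$ for the reflected term) using radiality of $\varphi$ and $\Pi_\cdot^{-1}=\Pi_\cdot$, and both finish with $a\wedge b+(a-b)^+=a$. The only cosmetic difference is that you check the two marginals via $f_1=g\circ\pi_1$ and $f_2=g\circ\pi_2$ separately, whereas the paper works with $f=g+h$ in one stroke; by linearity these are equivalent.
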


\begin{proof} For simplicity, we shall write $\tilde{\mathscr L}_{\gamma,\alpha, \kappa}$, $ \tilde{\mathscr L}_{1,\gamma}$, and   $ \tilde{\mathscr L}_{2,\alpha,\kappa}$ as $\tilde{\mathscr L}$,  $ \tilde{\mathscr L}_{1}$ and $\tilde{\mathscr L}_2$, respectively.
To demonstrate  that $\tilde{\mathscr L}$ is a coupling operator, we  only need to verify that  $\tilde{\mathscr L}_1$ and  $\tilde{\mathscr L}_2$
are coupling operators corresponding to $ \mathscr L_{1,\gamma}$ and  $ \mathscr L_2$, respectively. To achieve this, it is sufficient to prove that
for any $f\in C_b^1(\R^{4d})$ so that $f({\bf y})=g(x,v)+h(x',v')$ with some $h,g\in C_b^1(\R^{2d})$ and for any ${\bf y}=\big((x,v),(x',v')\big) $,
\begin{equation}\label{E3}
\big(\tilde{\mathscr L}_1f\big) ({\bf y})=(\mathscr L_{1,\gamma} g)(x,v)+ (\mathscr L_{1,\gamma} h)(x',v'),
\end{equation}
and
\begin{equation}\label{E4}
 \big(\tilde{\mathscr L}_2f\big) ({\bf y})=(\mathscr L_2 g)(x,v)+ (\mathscr L_2 h)(x',v').
\end{equation}
It is trivial to see that \eqref{E3} holds true. On the other hand, according to the definition  of  $\tilde{\mathscr L_2}$, we deduce  that
\begin{align*}
\big(\tilde{\mathscr L}_2f\big) ({\bf y})&=\big(J(x,v)\wedge J(x',v')\big)
\int_{\R^d}\big(g(x,u)-g(x,v)\big)\psi_{\alpha(x-x')_\kk }(u)\,  \d u\\
&\quad + \big(J(x,v)\wedge J(x',v')\big)  \int_{\R^d}\big(g(x,u) -g(x,v)\big)\Psi_{\alpha(x-x')_\kk }(u)\, \d u \\
&\quad+\big(J(x,v)  - J(x',v')\big)^+ \int_{\R^d}\big(g(x,u) -g(x,v)\big)\psi(u)\,  \d u\\
&\quad+\big(J(x,v)\wedge J(x',v')\big) \int_{\R^d}\big(h(x',u+\alpha(x-x')_\kk
) -h(x',v') \big)\psi_{\alpha(x-x')_\kk }(u)\,  \d u \\
&\quad+
\big(J(x,v)\wedge J(x',v')\big)\int_{\R^d}\big(h(x',\Pi_{ (x-x')_\kk }u
) -h(x',v')\big)\Psi_{\alpha(x-x')_\kk }(u)\, \d u\\
&\quad + \big(J(x',v')  - J(x,v)\big)^+  \int_{\R^d}\big(h(x',u
)-h(x',v')\big)\psi(u)\,  \d u \\
&=(\mathscr L_2 g)(x,v)\\
&\quad+\big(J(x,v)\wedge J(x',v')\big)\int_{\R^d}\big(h(x',u
) -h(x',v') \big)\psi_{-\alpha(x-x')_\kk }(u)\,  \d u\\
&\quad+ \big(J(x,v)\wedge J(x',v')\big)\int_{\R^d}\big(h(x',\Pi_{ (x-x')_\kk }u
) -h(x',v')\big)\Psi_{\alpha(x-x')_\kk }(u)\, \d u\\
&\quad+\big(J(x',v')  -J(x,v) \big)^+\int_{\R^d}\big(h(x',u
)-h(x',v')\big)\psi(u)\,  \d u,
\end{align*}
where in the second identity we took advantage of the definition of $\Psi_\cdot$, and used the basic identity: $a\wedge b+(a-b)^+=a$ for any $a,b\in \R$, as well as
substituted the variable $ u+\alpha(x-x')_\kk $ with the variable $u$.
 Note that the matrix $\Pi_\cdot$, defined in \eqref{EE1}, is an orthogonal matrix and its inverse $\Pi_{\cdot}^{-1}$ is equal to $\Pi_{\cdot}$.   Thus, we find
\begin{align*}
u+\alpha(x-x')_\kk &=\Pi_{(x-x')_\kk}^{-1}\big(\Pi_{ (x-x')_\kk }u+\alpha\Pi_{ (x-x')_\kk }(x-x')_\kk \big)\\
&=\Pi_{ (x-x')_\kk } \big(\Pi_{ (x-x')_\kk }u-\alpha (x-x')_\kk \big).
\end{align*}
This, along with the
radial property of $\varphi$ and $\Pi_{\cdot}^{-1}=\Pi_{\cdot}$, gives us that  for  any  mapping $\Theta:\R^{d} \to\R, $
\begin{equation}\label{EE3}
\begin{split}
 \int_{\R^d} \Theta\big( \Pi_{ (x-x')_\kk}u
\big) \Psi_{\alpha (x-x')_\kk}(u)\, \d u
 &= \int_{\R^d} \Theta\big(  \Pi_{  (x-x')_\kk}u
\big)  \Psi_{-\alpha (x-x')_\kk }(\Pi_{  (x-x')_\kk}u)\,\d u\\
&= \int_{\R^d} \Theta(  u
) \Psi_{-\alpha (x-x')_\kk }(u)\, \d u.
\end{split}
\end{equation}
The identity above enables us to obtain
\begin{align*}
&\big(J(x,v)\wedge J(x',v')\big)\int_{\R^d}\big(h(x',\Pi_{ (x-x')_\kk }u
) -h(x',v')\big)\Psi_{\alpha(x-x')_\kk }(u)\, \d u\\
&= \big(J(x,v)\wedge J(x',v')\big)\int_{\R^d}\big(h(x', u
) -h(x',v')\big)\Psi_{-\alpha(x-x')_\kk }(u)\, \d u.
\end{align*}
Consequently, we have
\begin{align*}
\big(\tilde{\mathscr L}_2f\big) ({\bf y})&=(\mathscr L_2 g)(x,v) \\
&\quad +\big(J(x,v)\wedge J(x',v')\big)  \int_{\R^d}\big(h(x',u
) -h(x',v') \big)\psi_{-\alpha(x-x')_\kk }(u)\,  \d u\\
&\quad + \big(J(x,v)\wedge J(x',v')\big)  \int_{\R^d}\big(h(x', u
) -h(x',v')\big)\Psi_{-\alpha(x-x')_\kk }(u)\, \d u \\
&\quad+\big(J(x',v')  -J(x,v) \big)^+\int_{\R^d}\big(h(x',u
)-h(x',v')\big)\psi(u)\,  \d u\\
&=(\mathscr L_2 g)(x,v)+ (\mathscr L_2 h)(x',v'),
\end{align*}
where in the second identity we used again the definition of $\Psi_\cdot$ and
the fact: $a\wedge b+(a-b)^+=a$ for any $a,b\in \R$.
Therefore,
\eqref{E4} is now available.
\end{proof}

Before we end this section, we address the issue on the existences of a Markovian coupling process associated with the coupling operator $\tilde{\mathscr L}_{\gamma,\alpha, \kappa}$.
To achieve this goal, we set on $\R^{4d}$
a vector field $\bar{\Xi}:=( v, v',-\gamma v-\nn U(x ), -\gamma v'-\nn U(x' ))$ and a jump measure
\begin{align*}\mathcal{Q}(x,x',v,v',\d y, \d y', \d u, \d u')= & \big(J(x,v)\wedge J(x',v')\big)\delta_{\{y=x,y'=x',u'=u+\alpha(x-x')_\kk\}} \psi_{\alpha(x-x')_\kk}(u)\,  \d u\\
&+ \big(J(x,v)\wedge J(x',v')\big)\delta_{\big\{y=x,y'=x',u'=\Pi_{ (x-x')_\kk }u\big\}}\Psi_{\alpha(x-x')_\kk}(u)\,  \d u\\
&+\big(J(x,v)  -  J(x',v')\big)^+\delta_{\{y=x,y'=x',u'=v'\}}\varphi(u)\,  \d u\\
&+ \big(J(x',v')  -  J(x,v)\big)^+\delta_{\{y=x,y'=x',u=v\}}\varphi(u)\,  \d u.\end{align*}
Under ${\bf(A_1)}$, it is clear that,  for any $(x,x',v,v')\in \R^{4d}$, $\mathcal{Q}(x,x',v,v',\d y, \d y', \d u, \d u')$ is a finite measure $\R^{4d}$. Furthermore, we set
a jump rate function $\mathcal{J}(x,x',v,v')=  \mathcal{Q}(x,x',v,v',\R^{4d})$ and define a normalized jump measure $$\bar{\mathcal{Q}}(x,x',v,v',\d y, \d y', \d u, \d u')=\mathcal {J}(x,x',v,v')^{-1} \mathcal{Q}(x,x',v,v',\d y, \d y', \d u, \d u').$$
Subsequently, according to  \cite[Section 3]{Davis}, there exists an $\R^{4d}$-valued PDMP $\big((X_t,V_t),(X_t',V_t')\big)_{t\ge0}$ corresponding to the triplet $(\bar{\Xi},\mathcal{J}(x,x',v,v'), \bar{\mathcal{Q}}(x,x',v,v',\d y, \d y', \d u, \d u'))$. Obviously, the generator of $\big((X_t,V_t),(X_t',V_t')\big)_{t\ge0}$ above is nothing else but  the coupling operator $\tilde{\mathscr L}_{\gamma,\alpha, \kappa}$.
This   proves the existence of a Markovian coupling process associated with the coupling operator $\tilde{\mathscr L}_{\gamma,\alpha, \kappa}$ and  we therefore  reach our desired goal.

\section{Proof of Theorem \ref{main}}\label{Section3}
Throughout this section, we shall write ${\bf y}=\big((x,v),(x',v')\big)$ for all $(x,v),(x',v')\in\R^{2d}$.
For  the parameters $\alpha, \aa_0>0$ (whose precise values are to be given later), we    introduce the following  abbreviated notations:
\begin{equation}\label{3D}
z:=x-x',\quad w:=v-v',\quad q  := z +\alpha^{-1}w,\quad r({\bf y}) :=\alpha_0 |z|  +|q |.
\end{equation}
For any $\vv>0$ and $(x,v),(x',v')\in\R^{2d}$, set
\begin{equation}\label{E5}
F ({\bf y}): =f(r({\bf y})),\quad\quad G( {\bf y}): =1+\vv\,  \mathcal W(x,v)+\vv\,\mathcal W(x',v'),
\end{equation}
where the $C^2$-function $f:[0,\8)\to [0,\8)$  satisfies  $f(0) = 0$, $f'\ge0$ and $ f''\le0$, and $\mathcal W$ is  the Lyapunov function  given in ($ {\bf B_1}  )$.

\begin{lemma}\label{lem1}
For $F,G$, given in \eqref{E5},
\begin{equation}\label{E8}
\big(\tilde{\mathscr L}_{\gamma,\alpha,\kappa}(FG)\big) ({\bf y})=G({\bf y})\big(\tilde{\mathscr L}_{1,\gamma}F\big)({\bf y})+F({\bf y})\big(\tilde{\mathscr L}_{\gamma,\alpha,\kappa}G\big)({\bf y})+\Pi({\bf y}),
\end{equation}
where
\begin{equation}\label{F1}
\begin{split}
\Pi({\bf y}):&= \big(J(x,v)\wedge J(x',v')\big)\\
    &\quad\quad\times\int_{\R^d}\big( F\big((x,u),(x',u+\alpha (z)_\kk )\big)- F ({\bf y})\big)  G\big((x,u),(x',u+\alpha  (z)_\kk  ) \big)\,\psi_{\alpha (z)_\kk  }(u)\,   \d u \\
  &\quad+  \big(J(x,v)\wedge J(x',v')\big)\\
    &\quad\quad\times  \int_{\R^d}    \big( F\big((x,u),(x',\Pi_{ (z)_\kk  }u)\big)-F({\bf y})\big) G\big((x,u),(x',\Pi_{ (z)_\kk  }u)\big) \Psi_{\alpha  (z)_\kk  }(u)\,   \d u \\
 &\quad+  \big(J(x,v)  -  J(x',v')\big)^+ \int_{\R^d}  \big( F\big((x,u),(x',v'
)\big)- F({\bf y})\big) G\big((x,u),(x',v'
)\big)\,\psi(u)\,   \d u\\
 &\quad+  \big(J(x',v')  -  J(x,v)\big)^+ \int_{\R^d} \big( F\big((x,v),(x',u
)\big)- F({\bf y})\big)  G\big((x,v),(x',u
)\big) \,\psi(u)\,   \d u.
\end{split}
\end{equation}
\end{lemma}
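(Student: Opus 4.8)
The plan is to exploit the decomposition $\tilde{\mathscr L}_{\gamma,\alpha,\kappa}=\tilde{\mathscr L}_{1,\gamma}+\tilde{\mathscr L}_{2,\alpha,\kappa}$ from \eqref{E2} and to establish a Leibniz-type identity for each of the two pieces separately: the transport operator $\tilde{\mathscr L}_{1,\gamma}$ obeys the ordinary product rule, whereas the jump operator $\tilde{\mathscr L}_{2,\alpha,\kappa}$ obeys a ``discrete'' product rule which, after a rearrangement of its four integral terms, is exactly what produces the remainder $\Pi$ displayed in \eqref{F1}.

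First I would treat $\tilde{\mathscr L}_{1,\gamma}$. By \eqref{W4} this operator is a sum of the four directional derivatives appearing there. Since $f\in C^2([0,\8))$ and $t\mapsto|t|$ is smooth off the origin, the function $F({\bf y})=f(r({\bf y}))$, with $z,q,r$ as in \eqref{3D}, is differentiable in ${\bf y}$ at every point with $z\neq{\bf 0}$ and $q\neq{\bf 0}$, while $G$ is $C^1$ by $({\bf B_1})$; applying the classical product rule to each of the four directional derivatives then gives $\big(\tilde{\mathscr L}_{1,\gamma}(FG)\big)({\bf y})=G({\bf y})\big(\tilde{\mathscr L}_{1,\gamma}F\big)({\bf y})+F({\bf y})\big(\tilde{\mathscr L}_{1,\gamma}G\big)({\bf y})$ at every such ${\bf y}$.

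Next I would treat the jump part $\tilde{\mathscr L}_{2,\alpha,\kappa}$. By \eqref{W55}, for any $h\in C_b^1(\R^{4d})$ the value $\big(\tilde{\mathscr L}_{2,\alpha,\kappa}h\big)({\bf y})$ is a sum of four integrals, the $i$-th of which has the form $c_i\int_{\R^d}\big(h({\bf y}_u^{(i)})-h({\bf y})\big)\,\nu_i(\d u)$, where the post-jump configurations are ${\bf y}_u^{(1)}=\big((x,u),(x',u+\alpha(z)_\kk)\big)$, ${\bf y}_u^{(2)}=\big((x,u),(x',\Pi_{(z)_\kk}u)\big)$, ${\bf y}_u^{(3)}=\big((x,u),(x',v')\big)$ and ${\bf y}_u^{(4)}=\big((x,v),(x',u)\big)$, and $(c_i,\nu_i)$ are the coefficients and measures read off from \eqref{W55}. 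Taking $h=FG$ and applying to each integrand the elementary identity
\[
F({\bf y}_u^{(i)})G({\bf y}_u^{(i)})-F({\bf y})G({\bf y})=\big(F({\bf y}_u^{(i)})-F({\bf y})\big)G({\bf y}_u^{(i)})+F({\bf y})\big(G({\bf y}_u^{(i)})-G({\bf y})\big),
\]
I would split the four terms accordingly: summing the first pieces $\big(F({\bf y}_u^{(i)})-F({\bf y})\big)G({\bf y}_u^{(i)})$ over $i=1,\dots,4$ reproduces exactly the right-hand side of \eqref{F1}, i.e. $\Pi({\bf y})$, while summing the second pieces $F({\bf y})\big(G({\bf y}_u^{(i)})-G({\bf y})\big)$ and pulling out the $u$-independent factor $F({\bf y})$ gives $F({\bf y})\big(\tilde{\mathscr L}_{2,\alpha,\kappa}G\big)({\bf y})$. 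Hence $\big(\tilde{\mathscr L}_{2,\alpha,\kappa}(FG)\big)({\bf y})=\Pi({\bf y})+F({\bf y})\big(\tilde{\mathscr L}_{2,\alpha,\kappa}G\big)({\bf y})$, and adding this to the identity for $\tilde{\mathscr L}_{1,\gamma}(FG)$ and using $\tilde{\mathscr L}_{1,\gamma}G+\tilde{\mathscr L}_{2,\alpha,\kappa}G=\tilde{\mathscr L}_{\gamma,\alpha,\kappa}G$ (from \eqref{E2} applied to $G$) yields \eqref{E8}.

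The argument is purely algebraic and I do not anticipate a genuine obstacle; the only points demanding care are bookkeeping — matching each of the four jump terms in \eqref{W55} with the correct post-jump configuration appearing in \eqref{F1}, especially the last two terms in which only one of the two velocity coordinates is updated — and the regularity remark that $F$ fails to be differentiable on $\{z={\bf 0}\}\cup\{q={\bf 0}\}$, so that the transport part of \eqref{E8} is to be read at points outside this set, which is precisely the region relevant to the estimates used later in the proof of Theorem \ref{main}.
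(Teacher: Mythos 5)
Your proposal is correct and takes essentially the same route as the paper: the classical product rule for the transport part $\tilde{\mathscr L}_{1,\gamma}$, and the ``addition--subtraction'' identity $F_uG_u-FG=(F_u-F)G_u+F(G_u-G)$ applied to each of the four jump integrals in \eqref{W55}, whose first pieces collect into $\Pi$ and whose second pieces collect into $F\cdot\tilde{\mathscr L}_{2,\alpha,\kappa}G$. Your remark about the non-differentiability of $F$ on $\{z={\bf 0}\}\cup\{q={\bf 0}\}$ is a reasonable point of care that the paper handles implicitly via the one-sided derivative $f'_-$ in the subsequent lemmas.
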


\begin{proof}
Apparently, the chain rule yields that for all ${\bf y}\in\R^{4d}$,
\begin{equation}\label{E6}
\big(\tilde{\mathscr L}_{1,\gamma}(FG)\big)({\bf y})  = F({\bf y})\big(\tilde{\mathscr L}_{1,\gamma}G\big)({\bf y})+G({\bf y})\big(\tilde{\mathscr L}_{1,\gamma}F\big)({\bf y}).
\end{equation}
Next, by invoking the addition-subtraction strategy and taking the definition of the operator $\tilde{\mathscr L_2}_{,\alpha,\kappa}$ into account,   we derive that  for all ${\bf y}\in\R^{4d}$,
\begin{equation}\label{E7}
\begin{split}
\big(\tilde{\mathscr L}_{2,\alpha,\kappa}(FG)\big)({\bf y})
& =\Pi({\bf y})\\
&\quad +
\big(J(x,v)\wedge J(x',v')\big) F({\bf y})\\
&\qquad\times\bigg\{\int_{\R^d}\big( G\big((x,u),(x',u+\alpha  (z)_\kk
)\big)- G_\vv({\bf y})\big)\,\psi_{\alpha  (z)_\kk  }(u)\,   \d u
\\
&\qquad\qquad+ \int_{\R^d}  \big( G\big((x,u),(x',\Pi_{ (z)_\kk  }u)\big)- G ({\bf y})\big)\Psi_{\alpha (z)_\kk  }(u)\,   \d u\bigg\}\\
&\quad+ \big(J(x,v)  -  J(x',v')\big)^+ F ({\bf y})  \int_{\R^d}\big( G\big((x,u),(x',v'
)\big)- G({\bf y})\big)\,\psi(u)\,   \d u\\
&\quad+ \big(J(x',v')  -  J(x,v)\big)^+ F ({\bf y})\int_{\R^d}\big( G\big((x,v),(x',u
)\big)- G({\bf y})\big)\,\psi(u)\,   \d u \\
&=F({\bf y})\big(\tilde{\mathscr L}_{2,\alpha,\kappa}G\big)({\bf y})+\Pi({\bf y}),
\end{split}
\end{equation}
where the remainder term  $\Pi(\cdot)$ was introduced in \eqref{F1}.
Thus, recalling $ \tilde{\mathscr L}_{\gamma,\alpha,\kappa} = \tilde{\mathscr L}_{1,\gamma} + \tilde{\mathscr L}_{2,\alpha,\kappa} $ and combining \eqref{E6} with \eqref{E7} enables us to derive \eqref{E8}.
\end{proof}

\medskip

In the following, we assume that Assumption   $({\bf B_1})$ holds. Let $\mathcal W(x,v)$ and $c_0, C_0$ be the function and the constants in  $({\bf B_1})$.
Define the following two sets
\begin{equation}\label{6D}
\mathcal A =\big\{{\bf y}\in\R^{4d}: 4\,  C_0 \ge c_0 \mathcal W(x,v)+c_0\mathcal W(x',v')\big\}, \quad \Gamma =\big\{{\bf y}\in\R^{4d}: r({\bf y})\ge R_0\big\},
\end{equation}
where
\begin{equation}\label{e6}
R_0=R_0(\alpha,\alpha_0): =\sup\big\{r({\bf y}): {\bf y}\in \mathcal A   \big\}.
\end{equation}
 Due to \eqref{EEE} (i.e., $\lim_{|x|+|v|\to \infty} \mathcal W(x,v)=\infty$),
 there is an  $R^*>0$ (independent of $\alpha_0,\alpha$ but dependent on $c_0$ and $C_0$) such that
$|x|+|v|\le R^*$ for all $(x,v)\in \mathcal A_0$, where
$$\mathcal A_0:=\big\{(x,v)\in\R^{2d}: \mathcal W(x,v)\le  4C_0/c_0\big\}.$$
It is trivial to see that  $\mathcal A$ is a subset of the product space $\mathcal A_0\times \mathcal A_0.$
Hence, we find that
\begin{align*}
R_0:=\sup_{{\bf y}\in \mathcal A}r({\bf y}) &\le\big(  1+\alpha_0 + \alpha^{-1} \big)\sup_{{\bf y}\in \mathcal A} \big(|x|+|v|+|x'| +|v'|\big)\\
&  \le2\big( 1+ \alpha_0 + \alpha^{-1} \big)\sup_{(x,v)\in \mathcal A_0}  \big(|x|+|v|\big )\\
&\le2R^*\big(  1+\alpha_0 + \alpha^{-1} \big).
\end{align*}
As a consequence, $R_0$ can be bounded by the number $R^*\big(  1+ \alpha_0 + \alpha^{-1} \big)$ up to an absolute constant independent of $\alpha_0,\alpha$.
On the other hand, it follows from the definitions of $\mathcal A$ and $\Gamma$ that $\mathcal A \subset \Gamma^c$.

Now, we set for all ${\bf y}\in\R^{4d}$,
\begin{equation*}
\tilde{F}(r({\bf y})):=f(r({\bf y})\wedge R_0), \qquad
\end{equation*}
where $f(\cdot)$ was given in \eqref{E5}.

\begin{lemma}\label{lem2}
Under Assumption   $({\bf B_1})$,  for  all
${\bf y}\in \mathcal A^c \cap \Gamma=\Gamma$,
\begin{equation} \label{e8}
\big(\tilde{\mathscr L}_{\gamma,\alpha,\kappa}(\tilde{F}G)\big) ({\bf y})\le -\frac{c_0 \vv}{1+2\vv} \tilde{F}({\bf y})G({\bf y}).
\end{equation}
\end{lemma}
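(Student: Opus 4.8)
The plan is to run the merely-Lipschitz function $\tilde F$ through the product identity of Lemma~\ref{lem1} and then to show that, on $\Gamma$, only the Lyapunov contribution coming from $G$ survives. First I would note that $\tilde f(\cdot):=f(\cdot\wedge R_0)$ remains nondecreasing and concave with $\tilde f(0)=0$, so the computation underlying Lemma~\ref{lem1} goes through with $f$ replaced by $\tilde f$, yielding
\begin{equation*}
\big(\tilde{\mathscr L}_{\gamma,\alpha,\kappa}(\tilde F G)\big)({\bf y})=G({\bf y})\big(\tilde{\mathscr L}_{1,\gamma}\tilde F\big)({\bf y})+\tilde F({\bf y})\big(\tilde{\mathscr L}_{\gamma,\alpha,\kappa}G\big)({\bf y})+\tilde\Pi({\bf y}),
\end{equation*}
where $\tilde\Pi$ is the expression \eqref{F1} with every occurrence of $F$ replaced by $\tilde F$.

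Next I would discard the first and third terms for ${\bf y}\in\Gamma$. Since $r({\bf y})\ge R_0$ we have $\tilde F({\bf y})=f(R_0)$, and in fact $\tilde F\le f(R_0)$ everywhere because $f$ is nondecreasing. As $\tilde f$ is constant on $[R_0,\8)$, the function ${\bf y}\mapsto\tilde F({\bf y})$ is locally constant on the interior of $\Gamma$, so $\tilde{\mathscr L}_{1,\gamma}\tilde F$ vanishes there (on the boundary of $\Gamma$ the concavity of $\tilde f$ together with $f'\ge0$ makes the one-sided derivative of $\tilde F$ along the damping Hamiltonian flow nonpositive), and since $G\ge1>0$ the first term is $\le0$. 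For the jump term, at every admissible post-jump configuration ${\bf y}'$ appearing in \eqref{F1} one has $\big(\tilde F({\bf y}')-\tilde F({\bf y})\big)G({\bf y}')\le0$, because $\tilde F({\bf y}')\le f(R_0)=\tilde F({\bf y})$ and $G({\bf y}')\ge1$; combined with the nonnegativity of $J(x,v)\wedge J(x',v')$, of the two $(\,\cdot\,)^+$-coefficients, and of the densities $\psi_{\cdot},\Psi_{\cdot},\psi$, this gives $\tilde\Pi({\bf y})\le0$. Hence $\big(\tilde{\mathscr L}_{\gamma,\alpha,\kappa}(\tilde F G)\big)({\bf y})\le\tilde F({\bf y})\big(\tilde{\mathscr L}_{\gamma,\alpha,\kappa}G\big)({\bf y})$, and since $\tilde F\ge0$ it suffices to bound $\tilde{\mathscr L}_{\gamma,\alpha,\kappa}G$.

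For that I would use the marginal (coupling) property from Lemma~\ref{lem0}: as $G=1+\vv\,\mathcal W(x,v)+\vv\,\mathcal W(x',v')$ is, up to the additive constant $1$, a sum of a function of $(x,v)$ and a function of $(x',v')$, one gets $\big(\tilde{\mathscr L}_{\gamma,\alpha,\kappa}G\big)({\bf y})=\vv(\mathscr L\mathcal W)(x,v)+\vv(\mathscr L\mathcal W)(x',v')$, whence \eqref{E1} gives $\big(\tilde{\mathscr L}_{\gamma,\alpha,\kappa}G\big)({\bf y})\le-c_0\vv S+2C_0\vv$ with $S:=\mathcal W(x,v)+\mathcal W(x',v')$. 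Writing $-c_0\vv S+2C_0\vv=-\tfrac12c_0\vv S+\vv\big(2C_0-\tfrac12c_0 S\big)$ and using ${\bf y}\in\Gamma\subset\mathcal A^c$, i.e. $c_0S>4C_0$, the bracketed term is negative, so $\big(\tilde{\mathscr L}_{\gamma,\alpha,\kappa}G\big)({\bf y})\le-\tfrac12c_0\vv S$; finally $S\ge2$ (since $\mathcal W\ge1$) is equivalent to $S(1+2\vv)\ge2(1+\vv S)$, i.e. $\tfrac12c_0\vv S\ge\frac{c_0\vv}{1+2\vv}(1+\vv S)=\frac{c_0\vv}{1+2\vv}G({\bf y})$. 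Substituting back yields \eqref{e8}.

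I expect the only real obstacle to be a couple of bookkeeping subtleties rather than any genuine difficulty. One is that Lemma~\ref{lem1} is stated for $C^2$ functions $f$ while $\tilde f$ is only piecewise $C^2$; this is handled by reading $\tilde{\mathscr L}_{1,\gamma}\tilde F$ as the one-sided derivative along the flow, which is harmless precisely because $\tilde f$ is concave and nondecreasing, so the ambiguous value of $\tilde f'$ at $R_0$ always points in the favourable direction. The other is that $\mathcal W$ need not be bounded, so the marginal identity for $\tilde{\mathscr L}_{\gamma,\alpha,\kappa}G$ is not literally an instance of Lemma~\ref{lem0}; this is dispatched by a standard localization using the Lyapunov structure in $({\bf B_1})$. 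Once these are in place the estimate is the short computation above, its only content being the split of the drift bound against the defining inequality $c_0\big(\mathcal W(x,v)+\mathcal W(x',v')\big)>4C_0$ of $\mathcal A^c$ and the elementary fact $S\ge2\Rightarrow S(1+2\vv)\ge2(1+\vv S)$.
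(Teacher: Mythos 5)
Your proof is correct and follows essentially the same route as the paper: apply the product decomposition of Lemma~\ref{lem1} with $\tilde F$ in place of $F$, observe that on $\Gamma$ both the drift term $G\,\tilde{\mathscr L}_{1,\gamma}\tilde F$ and the remainder $\tilde\Pi$ are nonpositive (because $\tilde F$ attains its global maximum $f(R_0)$ on $\Gamma$), reduce to $\tilde F\,\tilde{\mathscr L}_{\gamma,\alpha,\kappa}G$, and then combine the Lyapunov drift \eqref{E1} with the defining inequality $c_0\big(\mathcal W(x,v)+\mathcal W(x',v')\big)\ge 4C_0$ of $\mathcal A^c$ and the bound $G\le(1/2+\vv)\big(\mathcal W(x,v)+\mathcal W(x',v')\big)$ coming from $\mathcal W\ge1$. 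Your step $S\ge2\Leftrightarrow S(1+2\vv)\ge 2(1+\vv S)$ is exactly the paper's inequality \eqref{e700} in disguise, and your remark about the one-sided derivative at the boundary $r({\bf y})=R_0$ is a cleaner rendering of the paper's slightly loose statement that $f'_-(r({\bf y})\wedge R_0)=0$.
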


\begin{proof}
For ${\bf y}\in \mathcal A^c \cap \Gamma$ (in particular, $r({\bf y})\ge R_0$), $f(r({\bf y})\wedge R_0)=f(R_0)$ and $f'_-(r({\bf y})\wedge R_0)=0$, where $f'_-$ means the left derivative of $f$.
The chain rule shows that
$$
\big(\tilde{\mathscr L}_{1,\gamma}\tilde{F}\big)({\bf y})=f'_-(r({\bf y})\wedge R_0)\,\big(\tilde{\mathscr L}_{1,\gamma}r\big)({\bf y})=0
$$
 and so
\begin{equation}\label{e4}
G({\bf y})\big(\tilde{\mathscr L}_{1,\gamma}\tilde{F}\big)({\bf y})=0.
\end{equation}
Next,  in addition to $f'>0$ on $[0,\8)$ and the positive  properties of  $J$ and $\mathcal W$, we find that $\Pi({\bf y})\le0$ once $r({\bf y})\ge R_0$.

On the other hand, by applying Lemma \ref{lem0} and taking  \eqref{E1} into consideration, we have
\begin{equation}\label{e5}
\begin{split}
\tilde{F}({\bf y})\big(\tilde{\mathscr L}_{\gamma,\alpha,\kappa}G\big)({\bf y})&=\vv f(r({\bf y})\wedge R_0)\big((\mathscr L \mathcal W)(x,v)+(\mathscr L \mathcal W)(x',v')\big)\\
&\le \vv f(r({\bf y})\wedge R_0)\big(-c_0 \mathcal W(x,v)-c_0 \mathcal W(x',v')+2C_0   \big).
\end{split}
\end{equation}
Thus, combining \eqref{e4} with \eqref{e5} and making use  of  Lemma \ref{lem1} leads  to
\begin{align*}
\big(\tilde{\mathscr L}_{\gamma,\alpha,\kappa}(\tilde{F}G)\big) ({\bf y})&\le   \vv f( R_0)\big(-c_0 \mathcal W(x,v)-c_0 \mathcal W(x',v')+2C_0    \big)
\end{align*}
in case of $r({\bf y})\ge R_0.$ Subsequently, for all ${\bf y}\in \mathcal A^c,$ we obviously have
$$\frac{c_0 }{2} \big(\mathcal W(x,v)+\mathcal W(x',v')\big)\ge 2C_0.  $$
Accordingly, we arrive at
\begin{equation}\label{e7}
\big(\tilde{\mathscr L}_{\gamma,\alpha,\kappa}(\tilde{F}G)\big) ({\bf y})\le  -\frac{c_0\vv}{2} f( R_0)   \big( \mathcal W(x,v)+\mathcal W(x',v')\big) .
\end{equation}
Additionally, due to $\mathcal W\ge1$, we evidently have
\begin{equation}\label{e700}
G ({\bf y})=1+\vv \big(\mathcal W(x,v)+\mathcal W(x',v')\big)\le (1/2+\vv)\big(\mathcal W(x,v)+\mathcal W(x',v')\big).
\end{equation}
As a result, concerning  the case $r({\bf y})\ge R_0$, we obtain from \eqref{e7} that
$$
\big(\tilde{\mathscr L}_{\gamma,\alpha,\kappa}(\tilde{F}G)\big) ({\bf y})\le  -\frac{c_0 \vv}{1+2\vv}  f( R_0)  G ({\bf y})=-\frac{c_0 \vv}{1+2\vv}  \tilde{F}({\bf y})G({\bf y}).
$$
Hence, the desired assertion \eqref{e8} follows.
\end{proof}

Now, for any $a_0>0$ (which will be fixed later), we take the function $f$    in \eqref{E5} to be
\begin{equation}\label{E17}
f(s)=\frac{1}{a_0}\big(1-\e^{-a_0s}\big),\quad s\ge0,
\end{equation}
which definitely satisfies that $f(0)=0$, $f'>0$ and $f''<0$ on $[0,\8).$
Moreover, simple calculations  yield  the following two crucial estimates:
\begin{equation}\label{E18}
f(s)-f(t)\le f'(t)(s-t),\quad\quad s,t\ge0
\end{equation}
and
\begin{equation}\label{E19}
f(s)-f(t)\le \frac{1}{a_0}f'(t),\quad\quad s,t\ge0;
\end{equation}
see, for instance, \cite[Lemma 5.2]{BA} for more details.

In the remainder of the paper, we shall fix the threshold $\kk$ in the coupling operator $\tilde{\mathscr L}_{\gamma,\alpha,\kappa}$, defined in \eqref{E2},  as below $$\kappa=R_0/\alpha_0.$$

 \begin{lemma}\label{lem4}
Under   Assumptions $( {\bf A_1})$, $( {\bf A_2})$, $($${\bf B_1}$$)$ and $($${\bf B_2}$$)$,  for all ${\bf y}\in \Gamma^c,$
\begin{equation}\label{W5}
\begin{split}
 \big(\tilde{\mathscr L}_{\gamma,\alpha,\kappa}(FG)\big) ({\bf y})
&\le G   ({\bf y}) f'_-(r({\bf y}) )\,\big(\tilde{\mathscr L}_{1,\gamma}r\big)({\bf y})-\lambda_1 c_*(\alpha,\kappa) f'_{-}(r({\bf y}))    |q|\\
&\quad + \left[\frac{1}{a_0}\big(\lambda_2\big(c^*(\alpha,\kappa)\vee(c^{**} \aa)\big)+2\lambda_J(1+\alpha) (1\vee c^{**})\big) \right]f'_{-}(r({\bf y}))G({\bf y})|z|\\
&\quad+ \frac{2\alpha }{a_0}\lambda_J(1\vee c^{**})f'_{-}(r({\bf y}))G({\bf y})|q|\\
&\quad+\vv f(r({\bf y}) )\big(- c_0\big(\mathcal W(x,v)+\mathcal W(x',v')\big)
+2C_0    \big).
 \end{split}
\end{equation}
\end{lemma}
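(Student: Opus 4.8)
The plan is to expand $\big(\tilde{\mathscr L}_{\gamma,\alpha,\kappa}(FG)\big)({\bf y})$ via the decomposition \eqref{E8} of Lemma \ref{lem1} into the three pieces
$G({\bf y})\big(\tilde{\mathscr L}_{1,\gamma}F\big)({\bf y})$,
$F({\bf y})\big(\tilde{\mathscr L}_{\gamma,\alpha,\kappa}G\big)({\bf y})$ and $\Pi({\bf y})$, and to bound each of the latter two. For the second piece we use Lemma \ref{lem0}, so that $\big(\tilde{\mathscr L}_{\gamma,\alpha,\kappa}G\big)({\bf y})=\vv(\mathscr L\mathcal W)(x,v)+\vv(\mathscr L\mathcal W)(x',v')$, and then apply the Lyapunov bound \eqref{E1} from $({\bf B_1})$; this yields exactly the last line of \eqref{W5}, since $F({\bf y})=f(r({\bf y}))$. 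For the first piece, on $\Gamma^c$ we have $r({\bf y})<R_0$, hence $f'_-(r({\bf y}))=f'(r({\bf y}))>0$, and the chain rule gives $\big(\tilde{\mathscr L}_{1,\gamma}F\big)({\bf y})=f'_-(r({\bf y}))\big(\tilde{\mathscr L}_{1,\gamma}r\big)({\bf y})$, which is the first summand on the right of \eqref{W5}.

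**Estimating $\Pi({\bf y})$.**
The heart of the argument is to control $\Pi({\bf y})$ defined in \eqref{F1}. I would treat its four integral terms separately. For the ``refined basic coupling'' term (the $\psi_{\alpha(z)_\kk}$ integral), the change of the $r$-coordinate is exactly controlled: on the support of this measure the position components are unchanged and the velocity jump shifts $w=v-v'$ to $-\alpha(z)_\kk$, so that after the jump $q$ becomes $z-\alpha^{-1}\alpha(z)_\kk = z-(z)_\kk$, and since ${\bf y}\in\Gamma^c$ forces $|z|\le r({\bf y})/\alpha_0 < R_0/\alpha_0=\kk$, we get $(z)_\kk=z$, hence $q\mapsto 0$ and $r({\bf y})$ decreases by $|q|$. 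Using the concavity estimate \eqref{E18}, $f(r_{\mathrm{new}})-f(r({\bf y}))\le -f'_-(r({\bf y}))|q|$; combining with $J(x,v)\wedge J(x',v')\ge\lambda_1$, $A_{\alpha,\kk}(z)=\int\psi_{\alpha(z)_\kk}\ge c_*(\alpha,\kk)$ from \eqref{e12}, and the fact that $G$ on the post-jump configuration is controlled by the second inequality of \eqref{e11} in $({\bf B_2})$, produces the contractive term $-\lambda_1 c_*(\alpha,\kk)f'_-(r({\bf y}))|q|$. For the ``refined reflection coupling'' term (the $\Psi_{\alpha(z)_\kk}$ integral), $\Psi$ has total mass $1-A_{\alpha,\kk}(z)\le c^*(\alpha,\kk)|z|$ by \eqref{e12}; using \eqref{E19} to bound $f(r_{\mathrm{new}})-f(r({\bf y}))\le a_0^{-1}f'_-(r({\bf y}))$ and the $\Psi$-moment bound in \eqref{e11} to handle $G$, this term is $\le a_0^{-1}\lambda_2\big(c^*(\alpha,\kk)\vee(c^{**}\alpha)\big)f'_-(r({\bf y}))G({\bf y})|z|$, i.e. it contributes to the third summand in \eqref{W5}. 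For the two ``independent coupling'' remainder terms (the $(J(x,v)-J(x',v'))^+$ and $(J(x',v')-J(x,v))^+$ integrals), the jump changes $r$ by at most $\alpha^{-1}|u-v|$ or $|u-v|$ type quantities; bounding the prefactor by $|J(x,v)-J(x',v')|\le\lambda_J(|z|+|w|)\le\lambda_J(1+\alpha)|z|+\alpha\lambda_J|q|$ (using $|w|\le\alpha(|q|+|z|)$), applying \eqref{E19}, and using the first inequality of \eqref{e11} for the $G$-factor, gives the remaining contributions to the third summand and the fourth summand $\tfrac{2\alpha}{a_0}\lambda_J(1\vee c^{**})f'_-(r({\bf y}))G({\bf y})|q|$. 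Summing the four bounds, together with the $G\big(\tilde{\mathscr L}_{1,\gamma}F\big)$ and $F\big(\tilde{\mathscr L}_{\gamma,\alpha,\kappa}G\big)$ pieces, yields \eqref{W5}.

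**Main obstacle.**
The delicate point is bookkeeping the velocity-jump maps so that the change in $r({\bf y})=\alpha_0|z|+|q|$ is correctly evaluated on the support of each of the four intensity measures — in particular verifying that the truncation $(z)_\kk$ coincides with $z$ on $\Gamma^c$ (which is why $\kk=R_0/\alpha_0$ is chosen), so that the refined basic coupling genuinely sends $q$ to $0$, and checking that the reflection map $u\mapsto\Pi_{(z)_\kk}u$ does not increase $|z|$ (it leaves $z$ alone) while its effect on $q$ is absorbed into the $a_0^{-1}$-term via \eqref{E19} rather than needing a sign. The second subtlety is keeping the $G$-factors on the post-jump points under control uniformly; this is precisely the role of the two moment bounds in $({\bf B_2})$, and one must be careful that the post-jump velocity arguments (of the form $u$, $u+\alpha(z)_\kk$, $\Pi_{(z)_\kk}u$) are the ones appearing in \eqref{e11}, which after the change of variables $u\mapsto u$ (and using the radial identity \eqref{EE3}) they are. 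Everything else is the routine concavity calculus of \eqref{E17}–\eqref{E19}.
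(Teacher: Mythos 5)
Your proposal matches the paper's proof: same decomposition via Lemma \ref{lem1}, same treatment of $F\big(\tilde{\mathscr L}_{\gamma,\alpha,\kappa}G\big)$ via Lemma \ref{lem0} and $({\bf B_1})$, the same observation that $\kappa=R_0/\alpha_0>|z|$ on $\Gamma^c$ makes the refined basic coupling send $q$ to $0$, and the same four-term estimate of $\Pi$ using \eqref{E18}, \eqref{E19}, $({\bf A_2})$, $({\bf B_2})$, the reflection identity \eqref{EE3}, and the Lipschitz bound \eqref{J1} together with $|w|\le\alpha(|q|+|z|)$. One small correction: in the $\psi_{\alpha(z)_\kappa}$-integral $\Upsilon_1$ the prefactor is $-|q|$, so what is needed is a \emph{lower} bound on $\int G\,\psi_{\alpha(z)_\kappa}\,\d u$, which follows simply from $G\ge 1$ and $A_{\alpha,\kappa}(z)\ge c_*(\alpha,\kappa)$ in $({\bf A_2})$; the $\Psi$-moment bound (second inequality of \eqref{e11}) is not used there — it enters in $\Upsilon_2$, exactly as you describe.
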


\begin{proof}
For ${\bf y}\in \Gamma^c$ (i.e., $r({\bf y})<R_0$), the chain rule yields
\begin{equation} \label{YY5}
\big(\tilde{\mathscr L}_{1,\gamma}\tilde{F}\big)({\bf y})=f'_-(r({\bf y}))\,\big(\tilde{\mathscr L}_{1,\gamma}r\big)({\bf y}).
\end{equation}
 By virtue of \eqref{e5},  we readily have
\begin{equation}\label{EE2}
\tilde{F}({\bf y})\big(\tilde{\mathscr L}_{\gamma,\alpha,\kappa} G\big)({\bf y})
\le \vv f(r({\bf y}) )\big(-c_0    \mathcal W(x,v)-c_0  \mathcal W(x',v')+2C_0   \big) .
\end{equation}

Write down the four terms on the right hand side of \eqref{F1} as $\Upsilon_1({\bf y}), \Upsilon_2({\bf y}), \Upsilon_3({\bf y})$ and $\Upsilon_4({\bf y})$, respectively. Below, we intend to quantify $\Upsilon_i({\bf y}), i=1,2,3,4, $  separately.
According to the definition of $r(\cdot)$, we deduce
\begin{align*}
\Upsilon_1({\bf y})&= \big(J(x,v)\wedge J(x',v')\big)  \big(f( \alpha_0+ (1-(1\wedge\kk/|z|)) |z|) -f(r({\bf y}))\big)\\
  &\quad\times \int_{\R^d}    G\big((x,u),(x',u+\alpha  (z)_\kk ) \big)\,\psi_{\alpha (z)_\kk }(u)\,  (\d u)\\
& \le  \big(J(x,v)\wedge J(x',v')\big) f'_{-}(r({\bf y}))\\
&\quad\times\int_{\R^d}    G\big((x,u),(x',u+\alpha  (z)_\kk ) \big)\,\psi_{\alpha (z)_\kk }(u)\,  (\d u)( \alpha_0  |z|-r({\bf y}))\\
&=   - |q| \big(J(x,v)\wedge J(x',v')\big) f'_{-}(r({\bf y}))\int_{\R^d}    G\big((x,u),(x',u+\alpha  (z)_\kk)  \big)\,\psi_{\alpha (z)_\kk }(u)\,  (\d u) \\
&\le  - \lambda_1 c_*(\alpha,\kappa) f'_{-}(r({\bf y})) |q|,
\end{align*}
where in the first inequality we used the fact $\kappa=R_0/\alpha_0> r({\bf y})/\alpha_0\ge|z|$, in the identity we utilized $r({\bf y})=\alpha_0|z|+|q|$, and
the last inequality is available due to  (${\bf B_2}$) and $G\ge1$.

Next,
by invoking \eqref{E19},
together with  $f'>0$ on $[0,\8)$, we obtain    that
\begin{align*}
 \Upsilon_2({\bf y})
  & \le  \frac{1}{a_0} \big(J(x,v)\wedge J(x',v')\big)f'_{-}(r({\bf y}))\int_{\R^d}G\big((x,u),(x',\Pi_{ (z)_\kk  }u)\big)\,\Psi_{\alpha (z)_\kk}(u)\, \d u\\
  &\le  \frac{\lambda_2}{a_0} f'_{-}(r({\bf y})) \bigg\{1-A_{\alpha,\kk}(z)+\vv\int_{\R^d}\mathcal W(x,u)\,\Psi_{\alpha (z)_\kk}(u)\, \d u\\
  &\qquad\qquad\qquad\quad+\vv\int_{\R^d}\mathcal W(x',\Pi_{ (z)_\kk  }u)\,\Psi_{\alpha (z)_\kk}(u)\, \d u\bigg\}\\
   &= \frac{\lambda_2}{a_0} f'_{-}(r({\bf y})) \bigg\{1-A_{\alpha,\kk}(z)+\vv\int_{\R^d}\mathcal W(x,u)\,\Psi_{\alpha (z)_\kk}(u)\, \d u
   \\
& \qquad\qquad\qquad \quad +\vv\int_{\R^d}\mathcal W(x',u)\,\Psi_{-\alpha (z)_\kk}(u)\, \d u\bigg\}\\
  &\le \frac{\lambda_2}{a_0} f'_{-}(r({\bf y}))\Big \{1-A_{\alpha,\kk}(z)+\vv c^{**} \aa(\kk\wedge |z|)\Big(\inf_{v\in\R^d}\mathcal W(x,v)+\inf_{v'\in\R^d}\mathcal W(x',v')\Big)\Big\}\\
  &\le \frac{\lambda_2}{a_0}\big(c^*(\alpha,\kk)\vee(c^{**} \aa)\big) f'_{-}(r({\bf y}))G({\bf y})|z|,
\end{align*}
where the  identity is due to \eqref{EE3}, the last two inequality holds true  owing to  (${\bf B_2}$), and the last display follows from (${\bf A_2}$). Once more, using  \eqref{E19} yields
 \begin{align*}
 \Upsilon_3({\bf y})+ \Upsilon_4({\bf y})
&\le\frac{f'_{-}(r({\bf y}))}{a_0}\big(J(x,v)  -  J(x',v')\big)^+ \Big(1+\vv\mathcal W(x',v')+\vv\int_{\R^d}\mathcal W(x,u)\varphi(u)\,   \d u \Big)\\
&\quad+\frac{f'_{-}(r({\bf y}))}{a_0}\big(J(x',v')  -  J(x,v)\big)^+ \Big(1+\vv\mathcal W(x,v)+\vv\int_{\R^d}  \mathcal W(x',u)   \varphi(u)\,   \d u\Big)\\
&\le  \frac{\lambda_J f'_{-}(r({\bf y}))}{a_0}\big((1+\alpha)|z|+\alpha|q|\big)\\
&\quad\times \Big(2+\vv \mathcal W(x,v)+\vv \mathcal W(x',v')+c^{**}\vv\Big(\inf_{v\in\R^d}\mathcal W(x,v)+ \inf_{v'\in\R^d}\mathcal W(x',v')\Big)\Big)\\
&\le \frac{2\lambda_J}{a_0}(1\vee c^{**})\big((1+\alpha)|z|+\alpha|q|\big) f'_{-}(r({\bf y}))G({\bf y}),
 \end{align*}
 where in the second inequality we exploited \eqref{J1} and (${\bf B_2}$) as well as $w=\alpha(q-z)$.

 Consequently, we complete the proof of Lemma \ref{lem4}  by
 combining all the estimates above for $\Upsilon_i({\bf y})$ $(1\le i\le 4)$ with \eqref{YY5} and \eqref{EE2}.
\end{proof}

\ \

From now on, we assume that the inequality \eqref{S6} is solvable on the interval $(0,\gamma^2/4]$. Then,  there exists  $\bb\in(0,\gamma^2/4]$ solving the inequality
$
\bb\ge 4K_{\bb,U}.
$
Due to $\bb\in(0,\gamma^2/4]$, there exists an $\alpha>0$ such that $\bb=\alpha\gamma-\alpha^2.$
Hence, the   inequality
$$
\alpha\gamma-\alpha^2\ge 4 K_{\alpha(\gamma-\alpha),U}
$$
is also solvable. That is,
\begin{equation}\label{S4}
{\aa}^{-1}{\gamma}-1\ge 4\alpha^{-2}K_{\alpha(\gamma-\alpha),U}.
\end{equation}

In the sequel, we settle out the parameters  involved in \eqref{W55}, \eqref{3D} and \eqref{E5},  respectively.
 More precisely, for the positive constant $\aa$, a solution  to \eqref{S4}, we shall stipulate
\begin{equation}\label{1E}
\begin{split}
  \alpha_0 &= \frac{\gg}{\aa}-1,\quad \kk=\frac{R_0}{\alpha_0},\quad
a_0=\frac{4K_0}{\alpha_0\alpha}+4 \bigg(\frac{1}{\lambda_1c_*(\aa,\kk)}\vee\frac{2}{c_0}\bigg)\alpha\lambda_J (1\vee c^{**}),\\
\varepsilon&=\Bigg(\frac{c_0}{4C_0}\bigg(\frac{\lambda_1c_*(\aa,\kk)a_0}{4\alpha\lambda_J (1\vee c^{**})}-1\bigg)\Bigg)\wedge\bigg(\frac{a_0R_0}{8C_0(\e^{a_0R_0}-1)}\min\{\aa, \lambda_1  c_*(\aa,\kappa)  \}\bigg),
\end{split}
\end{equation}
where $R_0$ was defined by \eqref{e6}, and
$$
K_0:=\lambda_2\big(c^*(\alpha,\kappa)\vee(c^{**} \aa)\big)+2\lambda_J(1+\alpha) (1\vee c^{**}).$$
According to the prescribed value of $a_0$, it is evident to see that the value of $\vv$ set in \eqref{1E} is positive. Seemingly, the parameters set in \eqref{1E} are a little bit weird and complicated while the precise  alternatives will become more and more clear by tracking the proof of Lemma \ref{lem3} below.

  \begin{lemma}\label{lem3}
Assume that  $( {\bf H_0})$, $( {\bf A_1})$, $( {\bf A_2})$, $( {\bf B_1})$ and $( {\bf B_2})$ hold, and  that the inequality \eqref{S6} is solvable.
Then for all ${\bf y}\in
\Gamma^c,$
\begin{equation}\label{D4}
\big(\tilde{\mathscr L}_{\gamma,\alpha,\kappa}(\tilde{F}G)\big) ({\bf y})\le -\lambda^*
  \tilde{F}({\bf y})G({\bf y}),
\end{equation}
where $$\lambda^*:=
\frac{c_0\vv}{2(1+2\vv)}\wedge\bigg( \frac{a_0R_0}{4(\e^{a_0R_0}-1)}\min\{\aa, \lambda_1  c_*(\aa,\kappa)\} \left(1+4\varepsilon C_0/c_0\right)^{-1}\bigg).$$
\end{lemma}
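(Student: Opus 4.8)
The plan is to start from the estimate in Lemma~\ref{lem4} (available on $\Gamma^c$, on which $\tilde F=F$ since $r({\bf y})<R_0$) and to show that the choices of $\alpha_0$, $a_0$ and $\vv$ fixed in \eqref{1E} force all competing contributions to combine into the rate $\lambda^*$. The missing input is a bound for the synchronous-coupling drift of $r$. A direct computation gives $\big(\tilde{\mathscr L}_{1,\gamma}|z|\big)({\bf y})=\langle z,w\rangle/|z|=\alpha\langle z,q\rangle/|z|-\alpha|z|\le\alpha|q|-\alpha|z|$ and, writing $w=\alpha(q-z)$, $\big(\tilde{\mathscr L}_{1,\gamma}|q|\big)({\bf y})=|q|^{-1}\big\langle q,(\alpha-\gamma)(q-z)-\alpha^{-1}(\nn U(x)-\nn U(x'))\big\rangle\le(\alpha-\gamma)|q|+\big|(\alpha-\gamma)z+\alpha^{-1}(\nn U(x)-\nn U(x'))\big|$; since $(\alpha-\gamma)z+\alpha^{-1}(\nn U(x)-\nn U(x'))=-\alpha^{-1}\big(\alpha(\gamma-\alpha)(x-x')+\nn U(x')-\nn U(x)\big)$, assumption $({\bf H_0})$ with $\bb=\alpha(\gamma-\alpha)$ bounds its norm by $\alpha^{-1}K_{\alpha(\gamma-\alpha),U}|z|$. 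Adding $\alpha_0$ times the first bound to the second, the identity $\alpha_0\alpha=\gamma-\alpha$ (which is exactly the choice $\alpha_0=\gamma/\alpha-1$) makes the coefficient of $|q|$ vanish, leaving $\big(\tilde{\mathscr L}_{1,\gamma}r\big)({\bf y})\le-\big(\alpha_0\alpha-\alpha^{-1}K_{\alpha(\gamma-\alpha),U}\big)|z|\le-\tfrac34\alpha_0\alpha|z|$, where the last step uses \eqref{S4}.

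Next I insert this into Lemma~\ref{lem4}. The two $|z|$-proportional pieces, $G({\bf y})f'_-(r({\bf y}))\big(\tilde{\mathscr L}_{1,\gamma}r\big)({\bf y})$ and $\tfrac{K_0}{a_0}f'_-(r({\bf y}))G({\bf y})|z|$, combine, and because $a_0\ge 4K_0/(\alpha_0\alpha)$ their sum is $\le-\tfrac{\alpha_0\alpha}{2}G({\bf y})f'_-(r({\bf y}))|z|\le0$. For the "bad" velocity term, set $b:=2\alpha\lambda_J(1\vee c^{**})/a_0$ and split $G=1+\vv\big(\mathcal W(x,v)+\mathcal W(x',v')\big)$: the part $b\,f'_-(r({\bf y}))|q|$ is absorbed by the maximal-coupling contraction $-\lambda_1 c_*(\alpha,\kappa)f'_-(r({\bf y}))|q|$ since $a_0\ge 4\alpha\lambda_J(1\vee c^{**})/(\lambda_1 c_*(\alpha,\kappa))$ yields $b\le\tfrac12\lambda_1 c_*(\alpha,\kappa)$, while the part $b\,\vv\big(\mathcal W(x,v)+\mathcal W(x',v')\big)f'_-(r({\bf y}))|q|\le\tfrac{c_0}{4}\vv\big(\mathcal W(x,v)+\mathcal W(x',v')\big)f(r({\bf y}))$ — using $f'_-(r)|q|\le f'_-(r)r\le f(r)$, a consequence of the concavity of $f$ in \eqref{E17} — is absorbed by one quarter of the Lyapunov drift since $a_0\ge 8\alpha\lambda_J(1\vee c^{**})/c_0$ yields $b\le\tfrac{c_0}{4}$. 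This produces the intermediate estimate, valid for every ${\bf y}\in\Gamma^c$:
\begin{align*}
\big(\tilde{\mathscr L}_{\gamma,\alpha,\kappa}(FG)\big)({\bf y}) &\le -\tfrac{\alpha_0\alpha}{2}G({\bf y})f'_-(r({\bf y}))|z| - \tfrac{\lambda_1 c_*(\alpha,\kappa)}{2}f'_-(r({\bf y}))|q| \\
&\quad - \tfrac{3c_0\vv}{4}f(r({\bf y}))\big(\mathcal W(x,v)+\mathcal W(x',v')\big) + 2\vv C_0 f(r({\bf y})).
\end{align*}

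From here I split according to the set $\mathcal A$. On $\mathcal A^c\cap\Gamma^c$, where $\mathcal W(x,v)+\mathcal W(x',v')>4C_0/c_0$, the last two terms above are $\le-\tfrac{c_0\vv}{4}f(r({\bf y}))\big(\mathcal W(x,v)+\mathcal W(x',v')\big)$; discarding the non-positive first two terms and using $\mathcal W\ge1$, hence $G({\bf y})\le(\tfrac12+\vv)\big(\mathcal W(x,v)+\mathcal W(x',v')\big)$ exactly as in Lemma~\ref{lem2}, gives the bound $-\tfrac{c_0\vv}{2(1+2\vv)}F({\bf y})G({\bf y})$, the first constituent of $\lambda^*$. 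On $\mathcal A$, where $G({\bf y})\le1+4\vv C_0/c_0=:\bar G$, I discard the (non-positive) Lyapunov drift and absorb the leftover $2\vv C_0 f(r({\bf y}))$ into the two contraction terms via $f(r)\le\tfrac{\e^{a_0R_0}-1}{a_0R_0}f'_-(r)\,r$ for $r\le R_0$ — this is precisely where the second bound on $\vv$ in \eqref{1E} is used, guaranteeing $2\vv C_0\tfrac{\e^{a_0R_0}-1}{a_0R_0}\le\tfrac14\min\{\aa,\lambda_1 c_*(\aa,\kappa)\}$. After this absorption, using $G\ge1$, the surviving terms group as $-\tfrac{\alpha_0\alpha}{4}G({\bf y})f'_-(r({\bf y}))|z|$ and $-\tfrac{\lambda_1 c_*(\alpha,\kappa)}{4}f'_-(r({\bf y}))|q|$; bounding both coefficients below by $\tfrac{1}{4\bar G}\min\{\aa,\lambda_1 c_*(\aa,\kappa)\}$ (the extra $\bar G^{-1}$ paying for the bound $G\le\bar G$ used to reinstate the factor $G$ on the $|q|$-term), collecting $r=\alpha_0|z|+|q|$, and converting $f'_-(r)r$ back to $f(r)$ through $f'_-(r)r\ge\tfrac{a_0R_0}{\e^{a_0R_0}-1}f(r)$, one obtains $-\tfrac{a_0R_0}{4(\e^{a_0R_0}-1)}\min\{\aa,\lambda_1 c_*(\aa,\kappa)\}\,\bar G^{-1}F({\bf y})G({\bf y})$, the second constituent of $\lambda^*$. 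Taking the minimum of the two rates yields \eqref{D4}.

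The step I expect to be the real obstacle is the bookkeeping in the second paragraph: because the jump rate is state-dependent, the refined reflection/basic coupling error term carries the factor $G$ and hence the Lyapunov function, so it cannot simply be pitted against the $|q|$-contraction; one must split it and route one half into the Lyapunov drift. Making this work while simultaneously (i) cancelling the Hamiltonian cross term via $\alpha_0=\gamma/\alpha-1$, (ii) preserving a strict $|z|$-contraction after subtracting the jump-induced $|z|$-error, and (iii) ensuring the residual $2\vv C_0 f(r)$ is genuinely dominated rather than left over, is exactly what forces the otherwise opaque formulae for $a_0$ and $\vv$ in \eqref{1E}. A minor point, already built into the use of the left derivative $f'_-$, is that $r$ fails to be differentiable on $\{z={\bf0}\}\cup\{q={\bf0}\}$; this is handled by one-sided chain-rule estimates together with \eqref{E18}--\eqref{E19}, exactly as in Lemma~\ref{lem4}.
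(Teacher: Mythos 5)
Your proof is correct and follows essentially the same strategy as the paper: compute the synchronous drift of $r$, exploit the cancellation $\alpha_0\alpha+\alpha-\gamma=0$, apply $({\bf H_0})$ and \eqref{S4}, and then split according to the set $\mathcal A$, using the choices of $a_0$ and $\vv$ in \eqref{1E} to absorb the jump-induced error terms. The only (cosmetic) difference is that you split $G=1+\vv(\mathcal W+\mathcal W')$ on the $|q|$-error term once, uniformly on $\Gamma^c$, to obtain a common intermediate estimate before the case distinction, whereas the paper performs that split only in Case (ii) and instead invokes the bound $G\le 1+4\vv C_0/c_0$ directly (via \eqref{e:eps}) in Case (i); both routes land on the same constant $\lambda^*$.
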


\begin{proof}
A direct   calculation yields
\begin{align*}
\big(\tilde{\mathscr L}_{1,\gamma}r\big)({\bf y})&= \frac{\alpha_0}{|z|}\<z, w\>+ \frac{1}{|q|}\big\<q, -( \alpha^{-1}\gamma-1)w+\alpha^{-1}(  \nn U(x')- \nn U(x))\big\>  \\
  &=- \alpha_0\alpha |z|+\frac{ \alpha_0\alpha }{|z|}\<z,q\> -(\gamma-\alpha)|q|+\frac{1}{\alpha|q|}\big\<q,  \alpha( \gamma-\alpha)z+   \nn U(x')- \nn U(x) \big\>\\
  &\le \big( -\alpha_0 \alpha+  \alpha^{-1}K_{\alpha(\gamma-\alpha),U})|z|
+( \alpha+\alpha_0\alpha-\gamma)|q|\\
&=\big( -\alpha_0 \alpha+  \alpha^{-1}K_{\alpha(\gamma-\alpha),U}\big)|z|,
\end{align*}
where in the second identity  we utilized  the identity $w=\alpha(q-z)$,  in the inequality we employed $({\bf H_0})$, and in the last identity we took the fact that $\alpha+\alpha_0\alpha-\gamma=0$ due to \eqref{1E} into account.
Plugging the previous inequality back into \eqref{W5} implies   that for all ${\bf y}\in \Gamma^c$,
\begin{equation}\label{D2}
\begin{split}
\big(\tilde{\mathscr L}_{\gamma,\alpha, \kappa}(\tilde{F}G)\big) ({\bf y})&\le \Big(-  \aa_0 \alpha
+ \alpha^{-1}K_{\alpha(\gamma-\alpha),U}+\frac{K_0}{a_0} \Big)G   ({\bf y}) f'_-(r({\bf y}) )|z|\\
&\quad- \lambda_1  c_*(\alpha,\kappa)  f'_{-}(r({\bf y})) |q|+\frac{2\alpha}{a_0}\lambda_J (1\vee c^{**})G   ({\bf y})f'_{-}(r({\bf y}))|q|\\
&\quad+ \vv f(r({\bf y}) )\big(- c_0 \mathcal W(x,v)-c_0\mathcal W(x',v')
+2C_0   \big).
 \end{split}
\end{equation}
In terms  of \eqref{S4} and  \eqref{1E},  we obviously have
 \begin{equation*}
\alpha^{-1}K_{\alpha(\gamma-\alpha),U}\le \frac{\aa}{4}\Big(\frac{\gamma}{\alpha}-1\Big)=\frac{ \alpha_0\aa }{4}
, \quad \frac{K_0}{a_0}\le  \frac{  \alpha_0\aa }{4}. \end{equation*}
Then, \eqref{D2} is reduced into
\begin{equation}\label{D2-}
\begin{split}
\big(\tilde{\mathscr L}_{\gamma,\alpha, \kappa}(\tilde{F}G)\big) ({\bf y})
&\le  -\frac{ 1}{2}\aa_0 \aa G   ({\bf y}) f'_-(r({\bf y}) )|z|- \lambda_1  c_*(\aa,\kappa)  f'_{-}(r({\bf y})) |q|\\
&\quad+ \frac{2\alpha}{a_0}\lambda_J (1\vee c^{**})G   ({\bf y})f'_{-}(r({\bf y}))|q|\\
&\quad+ \vv f(r({\bf y}) )\big(- c_0 \mathcal W(x,v)-c_0\mathcal W(x',v')
+2C_0   \big).
 \end{split}
\end{equation}
In what follows, we aim to show that \eqref{D4} is verifiable  for two separate cases.

\smallskip

\noindent{\bf Case (i):} ${\bf y}\in \mathcal A \cap\Gamma^c$ (i.e., ${\bf y}\in \mathcal A $). For such case, we  in particular have
$G({\bf y})\le 1+4\varepsilon C_0/c_0$. In the light of the precise value of $\vv$ given in \eqref{1E}, we obtain  that
\begin{equation}\label{e:eps}\begin{split} \frac{2\alpha}{a_0}\lambda_J (1\vee c^{**})\left(1+4\varepsilon C_0/c_0\right)\le \frac{1}{2}\lambda_1c_*(\alpha,\kk).\end{split}\end{equation}
Thus, from \eqref{D2-} and $G\ge1$, we find that for all $y\in {\mathcal A}$,
\begin{align*}
\big(\tilde{\mathscr L}_{\gamma,\alpha, \kappa}(\tilde{F}G)\big) ({\bf y})&\le  -\frac{ 1}{2}\aa_0 \aa G   ({\bf y}) f'_-(r({\bf y}) )|z|- \lambda_1  c_*(\aa,\kappa)  f'_{-}(r({\bf y})) |q|\\
&\quad+ \frac{2\alpha}{a_0}\lambda_J (1\vee c^{**})\left(1+4\varepsilon C_0/c_0\right)f'_{-}(r({\bf y}))|q|+2C_0\vv f(r({\bf y}) )\\
&\le  -\frac{1}{2}  \aa_0\aa f'_-(r({\bf y}) )|z| -\frac{1}{2} \lambda_1  c_*(\aa,\kappa) f'_{-}(r({\bf y}))|q| + 2C_0 \vv f(r({\bf y}) )\\
&\le -\frac{1}{2}\min\{\aa, \lambda_1  c_*(\aa,\kappa)  \}f'_{-}(r({\bf y})) r({\bf y}) + 2C_0\vv f(r({\bf y}) ),
 \end{align*}
where in the last display we used the fact that $r({\bf y})=\alpha_0|z|+|q|$. Subsequently,
combining  the following facts that
\begin{equation}\label{S5}
 f'(s)s=\frac{a_0s}{\e^{a_0s}-1}f(s)\le f(s) ,\quad s\ge0
\end{equation}
and that $s\mapsto \frac{a_0s}{\e^{a_0s}-1}$ is decreasing on $[0,\8)$, leads to
\begin{equation}\label{D11}
\begin{split}
\big(\tilde{\mathscr L}_{\gamma,\alpha, \kappa}(\tilde{F}G)\big) ({\bf y})\le&  -\frac{1}{2}\min\{\alpha, \lambda_1  c_*(\alpha,\kappa)\}f'_{-}(r({\bf y})) r({\bf y})+  \frac{2C_0\vv (\e^{a_0R_0}-1)}{a_0R_0} f'_{-}(r({\bf y})) r({\bf y})\\
\le&  -\frac{1}{4}\min\{\alpha, \lambda_1  c_*(\alpha,\kappa)  \} f'_{-}(r({\bf y})) r({\bf y})\\
\le&  - \frac{a_0R_0}{4(\e^{a_0R_0}-1)}\min\{\alpha, \lambda_1  c_*(\alpha,\kappa)  \}f(r({\bf y}))\\
\le& - \frac{a_0R_0}{4(\e^{a_0R_0}-1)}\min\{\alpha, \lambda_1  c_*(\alpha,\kappa)  \} \left(1+4\varepsilon C_0/c_0\right)^{-1}G({\bf y})f(r({\bf y})),
 \end{split}
\end{equation}
where in the first inequality we invoked the basic fact $r({\bf y})\le R_0$ due to ${\bf y}\in \mathcal A,$
in the second inequality we employed the fact
 $$ \frac{2 \varepsilon C_0(\e^{a_0R_0}-1)}{a_0R_0}\le \frac{1}{4}\min\{\aa, \lambda_1  c_*(\aa,\kappa)\} $$
by taking the alternative  of $\vv$, given in \eqref{1E}, into consideration, and
in the last inequality we applied  \eqref{e700}.

\noindent{\bf Case (ii):} ${\bf y}\in \mathcal A^c \cap\Gamma^c$. Concerning this setting,  we have  $r({\bf y})<R_0$ and
\begin{equation}\label{S7}
   c_0 \mathcal W(x,v)+c_0\mathcal W(x',v')\ge 4 C_0.
\end{equation}
From \eqref{e:eps}, it is obvious to see that
\begin{equation*}
\frac{2\alpha}{a_0}\lambda_J (1\vee c^{**}) \le \frac{1}{2}\lambda_1c_*(\alpha,\kk).
\end{equation*}
Thus,
we derive  from \eqref{D2-} and  \eqref{S7} that for all ${\bf y}\in \mathcal A^c \cap\Gamma^c$,
\begin{equation} \label{7E}
\begin{split}
\big(\tilde{\mathscr L}_{\gamma,\alpha, \kappa}(\tilde{F}G)\big) ({\bf y})&\le  -\frac{1}{2} \aa_0 \aa G   ({\bf y}) f'_-(r({\bf y}) )|z|- \lambda_1  c_*(\aa,\kappa)  f'_{-}(r({\bf y})) |q|\\
&\quad+ \frac{2\alpha}{a_0}\lambda_J (1\vee c^{**}) f'_{-}(r({\bf y}))|q|\\
&\quad + \frac{2\alpha}{a_0}\lambda_J (1\vee c^{**})\vv\big(   \mathcal W(x,v)+  \mathcal W(x',v')  \big)f'_{-}(r({\bf y}))|q|\\
&\quad -\frac{c_0\vv}{2} \big(   \mathcal W(x,v)+ \mathcal W(x',v') \big) f(r({\bf y}) )\\
&\le-\frac{1}{2}\min\{\aa, \lambda_1  c_*(\aa,\kappa) \}f'_{-}(r({\bf y})) r({\bf y}) \\
&\quad+ \frac{c_0 \vv}{4}\big(   \mathcal W(x,v)+  \mathcal W(x',v')  \big)f'_{-}(r({\bf y}))|q|\\
&\quad-\frac{c_0 \vv}{2} \big(  \mathcal W(x,v)+  \mathcal W(x',v') \big) f(r({\bf y}) )\\
&\le-   \frac{c_0\vv}{4} \big(   \mathcal W(x,v)+  \mathcal W(x',v')  \big)f(r({\bf y}) )\\
&\le-\frac{ c_0\vv}{2(1+2\vv)} G({\bf y})f(r({\bf y}) ),
 \end{split}
\end{equation} where
in the first inequality we utilized the fact that $G({\bf y})=1+\vv\big(\mathcal W(x,v)+\mathcal W(x',v')\big)$,
in the second inequality we exploited  $G\ge1$, $r({\bf y})=\alpha_0|z|+|q|$ and
\begin{equation*}
 \frac{2\alpha}{a_0}\lambda_J (1\vee c^{**})\le \frac{c_0}{4}
\end{equation*}
with the help of \eqref{1E},
 in the third  inequality we used the basic fact that $q\le r({\bf y})$ and \eqref{S5}, and in the last inequality we took advantage of
 \eqref{e700} again.

Consequently, the assertion \eqref{D4} follows immediately  by combining \eqref{D11} with \eqref{7E}.
\end{proof}

Next, combining Lemma \ref{lem2} with Lemma \ref{lem3}, we arrive at the following proposition.

\begin{proposition}\label{pro1}
Assume that  $( {\bf H_0})$, $( {\bf A_1})$, $( {\bf A_2})$, $( {\bf B_1})$ and $( {\bf B_2})$ hold, and that the inequality \eqref{S6} is solvable. Then, for all ${\bf y}\in\R^{4d}$,
\begin{equation*}
\big(\tilde{\mathscr L}_{\gamma,\alpha,\kappa}(\tilde{F}G)\big) ({\bf y})\le -\lambda^*
  \tilde{F}({\bf y})G({\bf y}),
\end{equation*}
where $$\lambda^*:=
\frac{c_0\vv}{2(1+2\vv)}\wedge\bigg( \frac{a_0R_0}{4(\e^{a_0R_0}-1)}\min\{\aa, \lambda_1  c_*(\aa,\kappa)\} \left(1+4\varepsilon C_0/c_0\right)^{-1}\bigg).$$
\end{proposition}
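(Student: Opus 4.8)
The plan is to exploit that Lemmas \ref{lem2} and \ref{lem3} together already cover all of $\R^{4d}$. First I would recall the inclusion $\mathcal A\subset\Gamma^{c}$ noted just before Lemma \ref{lem2}, so that $\Gamma=\mathcal A^{c}\cap\Gamma$ and $\R^{4d}=\Gamma\cup\Gamma^{c}$; the argument then splits cleanly into these two regions.

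On $\Gamma$ I would invoke Lemma \ref{lem2}, which under $({\bf B_1})$ gives $\big(\tilde{\mathscr L}_{\gamma,\alpha,\kappa}(\tilde F G)\big)({\bf y})\le-\frac{c_{0}\vv}{1+2\vv}\,\tilde F({\bf y})G({\bf y})$. Since $\tilde F\ge0$, $G\ge1$ (hence $\tilde F G\ge0$) and, by the very definition of $\lambda^{*}$, one has $\lambda^{*}\le\frac{c_{0}\vv}{2(1+2\vv)}\le\frac{c_{0}\vv}{1+2\vv}$, so this bound is even stronger than the claimed one and replacing the constant by $\lambda^{*}$ is harmless. On $\Gamma^{c}$ I would simply quote Lemma \ref{lem3}: its hypotheses — $({\bf H_0})$, $({\bf A_1})$, $({\bf A_2})$, $({\bf B_1})$, $({\bf B_2})$ and the solvability of \eqref{S6} — are exactly those assumed in the proposition, and it yields $\big(\tilde{\mathscr L}_{\gamma,\alpha,\kappa}(\tilde F G)\big)({\bf y})\le-\lambda^{*}\tilde F({\bf y})G({\bf y})$ with the same $\lambda^{*}$. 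Putting the two cases together gives the inequality for every ${\bf y}\in\R^{4d}$, which is the assertion.

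The honest answer to "which step is the main obstacle" is that essentially none remains at this level. All the substantive analysis — the decomposition of $\Pi({\bf y})$ into $\Upsilon_{1},\dots,\Upsilon_{4}$ and their pointwise bounds in Lemma \ref{lem4}, the delicate calibration of $\alpha_{0},\kk,a_{0},\vv$ in \eqref{1E}, the contractive estimate on $\big(\tilde{\mathscr L}_{1,\gamma}r\big)({\bf y})$ coming from \eqref{S4}, and the two-case dissipativity estimate inside $\Gamma^{c}$ in Lemma \ref{lem3} — has already been carried out, as has the dissipativity outside the Lyapunov sublevel set in Lemma \ref{lem2}. The proposition is just the bookkeeping step that stitches the near-field estimate on $\Gamma^{c}$ and the far-field estimate on $\Gamma$ into a single global inequality; the only items requiring a line of care are that $\Gamma$ and $\Gamma^{c}$ genuinely exhaust $\R^{4d}$ (via $\mathcal A\subset\Gamma^{c}$) and that $c_{0}\vv/(1+2\vv)\ge\lambda^{*}$ together with $\tilde F G\ge0$, both of which are immediate from the definition of $\lambda^{*}$.
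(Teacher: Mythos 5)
Your proof is correct and matches the paper exactly: the authors simply state that the proposition follows ``by combining Lemma \ref{lem2} with Lemma \ref{lem3}'', and you have supplied the bookkeeping that makes this rigorous, namely that $\Gamma\cup\Gamma^{c}=\R^{4d}$, that Lemma \ref{lem3} gives the bound with $\lambda^{*}$ directly on $\Gamma^{c}$, and that on $\Gamma$ the rate $c_{0}\vv/(1+2\vv)$ from Lemma \ref{lem2} can be weakened to $\lambda^{*}$ because $\lambda^{*}\le c_{0}\vv/(2(1+2\vv))$ and $\tilde{F}G\ge0$.
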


\ \

Now, we are in position to complete the proof of Theorem \ref{main}. Although, with  Proposition \ref{pro1} at hand,
the proof of Theorem \ref{main} is more or less standard,
 we herein provide an outline to make the content self-contained.
\begin{proof}[Proof of Theorem $\ref{main}$]

Let $({\bf Y}_t)_{t>0}=\big((X_t,V_t),(X_t',V_t')\big)_{t\ge0}$ be the coupling process associated with the coupling operator $\tilde{\mathscr L}_{\gamma,\alpha,\kappa}$ as mentioned at the end of Section \ref{Section2}, and let $\tilde{\E}^{{\bf y}}$  be the expectation operator under the probability measure
$\tilde{\P}^{{\bf y}}$,  the distribution of $({\bf Y}_t)_{t\ge0}$ with the initial point ${\bf y}$. Then, we deduce from Proposition \ref{pro1} that
\begin{equation}\label{F3}
\tilde{\E}^{\bf y}(\tilde{F}G)({\bf Y}_t)\le (\tilde{F}G)({\bf Y}_0)\e^{-\lambda^*t}.
\end{equation}
Note that $(\tilde{F}G)({\bf y}) $ is comparable with the quasi-distance function
$$\Phi({\bf y}):=\big((|x-x'|+|v-v'|)\wedge 1\big)\big(\mathcal W(x,v)+\mathcal W(x',v')\big);$$
that is, there exist constants $c_1,c_2>0$  such that for all ${\bf y}\in \R^{4d}$,
$$c_1\Phi ({\bf y})\le (\tilde{F}G)({\bf y}) \le c_2\Phi({\bf y}).$$
This, together with \eqref{F3}, yields that there is a constant $c_3>0$ such that for all ${\bf y}\in \R^{4d}$ and $t>0$,
\begin{equation}\label{F4}
W_{\Phi}\big(\delta_{(x,v)}P_t,\delta_{(x',v')}P_t \big)\le c_3\Phi({\bf y})\e^{-\lambda^* t},
\end{equation}
which further implies that the semigroup $(P_t)_{t\ge0}$ exhibits the Feller property, and moreover, via the Banach fixed point theorem, has a unique invariant probability measure $\mu\in \mathscr P_\Phi(\R^{2d})$; see, for instance,  \cite[Corollary 4.11]{HMS} for more details.  Now, for any $\nu\in \mathscr P_\Phi(\R^{2d})$, integrating with respect to $\pi\in\mathscr C(\nu,\mu)$ on both sides of \eqref{F4} yields
\begin{equation*}
\int_{\R^{2d}\times\R^{2d}}W_{\Phi}\big(\delta_{(x,v)}P_t,\delta_{(x',v')}P_t \big)\pi(\d {\bf y})\le c_3\e^{-\lambda^* t}\int_{\R^{2d}\times\R^{2d}}\Phi({\bf y})\pi(\d {\bf y}),
\end{equation*}
which,  combining  the basic fact that
\begin{equation*}
W_{\Phi}\big(\nu P_t,\mu P_t\big)\le\int_{\R^{2d}\times\R^{2d}}W_{\Phi}\big(\delta_{(x,v)}P_t,\delta_{(x',v')}P_t \big)\pi(\d {\bf y}) ,
\end{equation*}
and taking infimum with respect to all  $\pi\in\mathscr C(\nu,\mu)$ leads to
\begin{equation*}
W_{\Phi}\big(\nu P_t,\mu P_t\big)\le c_3\e^{-\lambda^* t} W_{\Phi}\big(\nu  ,\mu \big).
\end{equation*}
Thus, \eqref{e:main-1} follows by   taking the invariance of the invariant probability measure  $\mu$ into consideration. We therefore complete the corresponding proof.
\end{proof}

\section{Sufficient conditions on Assumptions and \eqref{S6} }
In this section, we aim to provide some sufficient conditions or concrete examples to demonstrate that all the assumptions and the technical condition \eqref{S6} are verifiable.

\begin{proposition}\label{pro} Suppose that the function $r\mapsto \varphi(r)$ is bounded and decreasing on $(0,\infty)$ such that $\varphi(r)>0$ for all $r>0$. Then, Assumption ${\bf(A_2)}$ holds.
 \end{proposition}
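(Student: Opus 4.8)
The plan is to verify the two inequalities in \eqref{e12} separately, exploiting the radial--nonincreasing structure of $\varphi$ together with its boundedness. Write $\varphi_0$ for the radial profile of $\varphi$, so that $\varphi(u)=\varphi_0(|u|)$, and put $M:=\sup_{r>0}\varphi_0(r)=\lim_{r\downarrow0}\varphi_0(r)$, which is finite by hypothesis; since $\varphi\in L^1(\R^d)$ and $\varphi_0$ is nonincreasing, one also has $\varphi_0(r)\to0$ as $r\to\8$. For $t\in(0,M)$ the superlevel set $\{u\in\R^d:\varphi(u)>t\}$ coincides, up to a Lebesgue-null set, with the ball $B_{\rho(t)}$ of radius $\rho(t):=\sup\{r>0:\varphi_0(r)>t\}\in(0,\8)$, and $\rho$ is nonincreasing. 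The layer-cake formula then gives $\varphi(u)=\int_0^M\I_{B_{\rho(t)}}(u)\,\d t$ for a.e.\ $u$, and integrating in $u$ (Tonelli) yields $\int_0^M|B_{\rho(t)}|\,\d t=\int_{\R^d}\varphi(u)\,\d u=1$. Throughout, I fix $\alpha,\kk>0$ and abbreviate $\xi:=\alpha(z)_\kk$, so that $|\xi|=\alpha(|z|\wedge\kk)\le\alpha\kk$.

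For the lower bound I would argue by monotonicity alone. For a.e.\ $u\in\R^d$, since $|u+\xi|\le|u|+|\xi|\le|u|+\alpha\kk$ and $\varphi_0$ is nonincreasing, $\psi_\xi(u)=\varphi_0(|u|)\wedge\varphi_0(|u+\xi|)\ge\varphi_0(|u|+\alpha\kk)$; hence $A_{\alpha,\kk}(z)\ge\int_{\R^d}\varphi_0(|u|+\alpha\kk)\,\d u=:c_*(\alpha,\kk)$, a quantity which does not depend on $z$. Because $\varphi_0(r)>0$ for every $r>0$, the integrand is strictly positive on all of $\R^d$, so $c_*(\alpha,\kk)\in(0,1]$; this gives the first inequality in \eqref{e12}.

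For the second inequality I would first dispose of the case $|z|\ge\kk$, where $1-A_{\alpha,\kk}(z)\le1\le|z|/\kk$ trivially. When $|z|<\kk$ one has $(z)_\kk=z$ and $\xi=\alpha z$, and since $\varphi$ and its translate carry the same mass, $1-A_{\alpha,\kk}(z)=\int_{\R^d}\big(\varphi(u)-\varphi(u+\xi)\big)^+\,\d u=\tfrac12\|\varphi-\varphi(\cdot+\xi)\|_{L^1}$. Inserting the layer-cake representation, using $\I_{B_{\rho(t)}}(u+\xi)=\I_{B_{\rho(t)}-\xi}(u)$, and applying Tonelli's theorem bounds this by $\tfrac12\int_0^M\big|B_{\rho(t)}\triangle(B_{\rho(t)}-\xi)\big|\,\d t$. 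An elementary estimate for the symmetric difference of a ball and its translate---namely $B_\rho\setminus(B_\rho-\xi)\subseteq B_\rho\setminus B_{(\rho-|\xi|)^+}$ (and symmetrically), together with $\rho^d-\big((\rho-|\xi|)^+\big)^d\le d\,\rho^{d-1}|\xi|$---gives $\big|B_\rho\triangle(B_\rho-\xi)\big|\le 2d\,\omega_d\,\rho^{d-1}|\xi|$, where $\omega_d:=|B_1|$. Thus $1-A_{\alpha,\kk}(z)\le d\,\omega_d\,|\xi|\int_0^M\rho(t)^{d-1}\,\d t$, and it only remains to bound the last integral: splitting according to whether $\rho(t)\le1$ or $\rho(t)>1$ gives $\int_0^M\rho(t)^{d-1}\,\d t\le M+\omega_d^{-1}\int_0^M|B_{\rho(t)}|\,\d t=M+\omega_d^{-1}<\8$. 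Combining the two cases, the second inequality of \eqref{e12} holds with $c^*(\alpha,\kk):=\max\{1/\kk,\ \alpha d\,\omega_d(M+\omega_d^{-1})\}$.

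The only genuinely delicate point is the last integrability estimate, and this is exactly where the boundedness hypothesis $M<\8$ is indispensable: it is what upgrades the $L^1$-modulus of continuity of $\varphi$ (which, for a general integrable radial nonincreasing density, is only $o(1)$ or H\"older in $|\xi|$) to the Lipschitz bound $\|\varphi-\varphi(\cdot+\xi)\|_{L^1}\le C|\xi|$ needed for \eqref{e12}. Some routine care is also required concerning the almost-everywhere nature of the layer-cake identity when $\varphi_0$ has jump discontinuities---the corresponding spheres are Lebesgue-null, so this causes no harm---and one checks along the way that every constant produced above is finite precisely because $\varphi$ is a probability density, i.e.\ $\varphi\in L^1(\R^d)$ with total mass $1$.
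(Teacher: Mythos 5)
Your argument is correct, and it takes a genuinely different route from the one in the paper. The paper argues throughout in polar coordinates: for the lower bound it uses (as you do) the pointwise inequality $\psi_\xi(u)\ge\varphi_0(|u|+|\xi|)$ and then bounds the resulting radial integral from below on $\{|\xi|\le\alpha\kappa\}$; for the upper bound it writes $1-A_{\alpha,\kappa}(z)\le\int_{\{|u|\le|\xi|\}}\varphi(u)\,\d u=c_0\int_0^{|\xi|}r^{d-1}\varphi_0(r)\,\d r\le c_1|\xi|$, using $\varphi_0\le M$ and the a priori bound $|\xi|\le\alpha\kappa$. Your proof instead uses the layer-cake decomposition $\varphi=\int_0^M\I_{B_{\rho(t)}}\,\d t$, the identity $1-A_{\alpha,\kappa}(z)=\tfrac12\|\varphi-\varphi(\cdot+\xi)\|_{L^1}$ (valid because both translates have unit mass), the annulus bound $|B_\rho\triangle(B_\rho-\xi)|\le 2d\,\omega_d\,\rho^{d-1}|\xi|$, and the integrability estimate $\int_0^M\rho(t)^{d-1}\,\d t\le M+\omega_d^{-1}$ (which uses boundedness and unit mass together). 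The two approaches buy different things. The paper's chain is shorter, but for $d\ge2$ it silently drops an annular contribution: integrating $\varphi(u)-\psi_\xi(u)\le\varphi_0(|u|)-\varphi_0(|u|+|\xi|)$ in polar coordinates yields not just $\int_{\{|u|\le|\xi|\}}\varphi$ but also a term proportional to $\int_{|\xi|}^{\infty}\big(r^{d-1}-(r-|\xi|)^{d-1}\big)\varphi_0(r)\,\d r$, which must also be shown to be $O(|\xi|)$ (it is, by exactly the kind of mean-value-theorem bound you apply to $\rho^d-((\rho-|\xi|)^+)^d$). Your layer-cake version accounts for this automatically through the symmetric difference of translated balls, so it is the tighter and more robust argument; the cost is invoking the coarea/layer-cake machinery and the geometry of super-level sets. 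The case split at $|z|=\kappa$ and your explicit constants $c_*(\alpha,\kappa)=\int\varphi_0(|u|+\alpha\kappa)\,\d u$ and $c^*(\alpha,\kappa)=\max\{1/\kappa,\,\alpha d\,\omega_d(M+\omega_d^{-1})\}$ all check out.
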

\begin{proof}
Since $\varphi(\cdot)$ is decreasing on $(0,\8)$, we deduce that
for any $z\in \R^d$,
\begin{align*}\int_{\R^d} \psi_z(u)\,\d u\ge& \int_{\R^d} \big(\varphi(|u|)\wedge \varphi(|u|+|z|)\big)\,\d u
\ge \int_{\R^d} \varphi(|u|+|z|)\,\d u=\int_{\{|u|\ge |z|\}}\varphi(u)\,\d u,\end{align*} which implies that for $r>0$,
$$\inf_{z\in \R^d: |z|\le r} \int_{\R^d}\psi_z(u)\,\d u\ge  \int_{\{|u|\ge r\}}\varphi(u)\,\d u>0$$
and that for all $z\in \R^d$, $$1-\int_{\R^d} \psi_z(u)\,\d u\le \int_{\{|u|\le  |z|\}}\varphi(u)\,\d u=c_0\int_0^{|z|}r^{d-1}\varphi(r)\,\d r\le c_1|z|$$
with some constants $c_0,c_1>0.$
Therefore, Assumption ${\bf(A_2)}$ holds.\end{proof}

The Lyapunov function $\mathcal W$ satisfying \eqref{EEE} and \eqref{E1} is imposed to analyze the
long-time behavior of the process $(X_t,V_t)_{t\ge0}$. Below, we build  examples to demonstrate that \eqref{EEE} and \eqref{E1} are valid. Suppose that $U(x)\ge0$ for all $x\in \R^d$. Let
\begin{equation}\label{2D}
\mathcal W_0(x,v)=1+2 U(x)+\theta_0|x|^2+ |v|^2+\theta^*\<x,v\>,\quad x,v\in\R^d,
\end{equation}
where
\begin{equation}\label{S9}
\theta_0:=\frac{1}{4} (\lambda_1+\gamma)^2,
\quad \theta^*:=\frac{(\lambda_1+\gamma)^2}{2(\lambda_2+\gamma)}
\end{equation} with $\lambda_1$ and $\lambda_2$ being given in Assumption ${\bf(A_1)}$.
Due to $\lambda_1\le \lambda_2,$ it is easy to see from \eqref{S9} that
$$
(\theta^*)^2=\frac{ (\lambda_1+\gamma)^4}{4(\lambda_2+\gamma)^2}\le \theta_0.
$$
By the  inequality that $2ab\le \vv a^2+b^2/\vv$ for all $ a,b\in\R$ and $\vv>0$, we obtain that for all $x,v\in\R^d,$
\begin{equation}\label{E06}
\begin{split}
\big(4\theta_0 -(\theta^*)^2\big)\bigg(\frac{1}{8 } |x|^2+\frac{ 1}{(\theta^*)^2+4\theta_0 } |v|^2\bigg)&\le \theta_0|x|^2+ |v|^2+\theta^*\<x,v\>\\
&\le
\Big(1\vee \theta_0 +\frac{\theta^*}{2}\Big)\big(|x|^2+ |v|^2\big)
\end{split}
\end{equation}
so, in view of $U(x)\ge0$,
$\mathcal W_0\ge1$ and $\lim_{|x|+|v|\to\8}\mathcal W_0(x,v)=\8$.

\begin{proposition}\label{pro4}
Assume that $({\bf  A_1})$ holds,  $\int_{\R^d} |u|^\bb\varphi(u)\,\d u<\infty$ for some $\bb\in (0,2]$, and that $U(x)\ge0$ for all $x\in\R^d$ satisfying that there exist constants \begin{equation}\label{S0}c^*>c_0^*:=\frac{(\lambda_1+\gamma)^2(\lambda_2-\lambda_1)^2}{4(2\lambda_1\lambda_2-\lambda_1^2
+4\lambda_2\gamma+3\gamma^2)}\end{equation} and $c^{**}, C_0^*\ge0$ such that for all $x\in \R^d$,
\begin{equation}\label{E05}
\<x,\nn U(x)\>\ge c^*|x|^2+c^{**}U(x)-C_0^*.
\end{equation}Then, there exist  constants $c_0,C_0>0$ so that
\begin{equation}\label{F9}
(\mathscr L \mathcal W)(x,v)\le -c_0\mathcal W(x,v)+C_0,
\end{equation}
where $\mathcal W(x,v):=\mathcal W_0(x,v)^{{\bb}/{2}}$ with  $\mathcal W_0$ being  defined in \eqref{2D}.
Hence, the Assumption $({\bf B_1})$ holds for $\mathcal W $.
\end{proposition}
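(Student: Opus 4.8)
The plan is to split $\mathscr L=\mathscr L_{1,\gamma}+\mathscr L_2$ and estimate the two contributions to $(\mathscr L\mathcal W)(x,v)$, where $\mathcal W=\mathcal W_0^{\beta/2}$, separately. For the Liouville part the chain rule gives $\mathscr L_{1,\gamma}\mathcal W=\tfrac\beta2\mathcal W_0^{\beta/2-1}\,\mathscr L_{1,\gamma}\mathcal W_0$, and computing $\nabla_x\mathcal W_0=2\nabla U(x)+2\theta_0x+\theta^*v$ and $\nabla_v\mathcal W_0=2v+\theta^*x$ — the two terms $\pm2\langle\nabla U(x),v\rangle$ cancelling — one gets
\[
(\mathscr L_{1,\gamma}\mathcal W_0)(x,v)=(2\theta_0-\gamma\theta^*)\langle x,v\rangle+(\theta^*-2\gamma)|v|^2-\theta^*\langle x,\nabla U(x)\rangle .
\]
I would then invoke \eqref{E05} to bound $-\theta^*\langle x,\nabla U(x)\rangle\le-\theta^*c^*|x|^2-\theta^*c^{**}U(x)+\theta^*C_0^*$ (when $c^{**}=0$ this is the situation of the examples, where $U\asymp|x|^2$ and the $U$-term is subsumed by the $|x|^2$-term).

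For the collision part, $(\mathscr L_2\mathcal W)(x,v)=J(x,v)\int_{\R^d}\big(\mathcal W_0(x,u)^{\beta/2}-\mathcal W_0(x,v)^{\beta/2}\big)\varphi(u)\,\d u$; since $u\mapsto\mathcal W_0(x,u)-|u|^2$ is affine and $\varphi$ is radial, the first moment of $u$ integrates to $0$. The goal is to bound this by $\tfrac\beta2J(x,v)\mathcal W_0(x,v)^{\beta/2-1}\big(-|v|^2-\theta^*\langle x,v\rangle\big)$ up to lower-order terms. When $\varphi$ has a finite second moment $m_2$ — in particular for $\beta=2$ — this follows at once from the concavity of $t\mapsto t^{\beta/2}$: expanding around $\mathcal W_0(x,0)$ gives $\int_{\R^d}\mathcal W_0(x,u)^{\beta/2}\varphi(u)\,\d u\le\mathcal W_0(x,0)^{\beta/2}+\tfrac\beta2 m_2$, and then $\mathcal W_0(x,0)^{\beta/2}-\mathcal W_0(x,v)^{\beta/2}\le\tfrac\beta2\mathcal W_0(x,v)^{\beta/2-1}\big(-|v|^2-\theta^*\langle x,v\rangle\big)$. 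For heavy-tailed $\varphi$ with $\beta<2$ (where $m_2=\infty$ is possible) one must instead split the $u$-integral at $\{\mathcal W_0(x,u)\le2\mathcal W_0(x,0)\}$: on that set the concavity bound applies together with the elementary moment estimate $\int_{\{|u|\le R\}}|u|^2\varphi(u)\,\d u\le1+R^{2-\beta}\int_{\R^d}|u|^\beta\varphi(u)\,\d u$, taken with $R\asymp\mathcal W_0(x,0)^{1/2}$ (legitimate since $(\theta^*)^2\le\theta_0$, which also gives $\mathcal W_0\ge1$ and $|x|^2\lesssim\mathcal W_0(x,v)$), so that the excess over the $\beta=2$ computation is a power of $\mathcal W_0$ strictly below $\beta/2$; on the complement $\mathcal W_0(x,u)^{\beta/2}\lesssim|u|^\beta$ and the $|u|^\beta$-tail of $\varphi$ is $o(1)$. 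In all cases one arrives at
\[
(\mathscr L_2\mathcal W)(x,v)\le\frac{\beta}{2}\,J(x,v)\,\mathcal W_0(x,v)^{\beta/2-1}\big(-|v|^2-\theta^*\langle x,v\rangle\big)+o\big(\mathcal W_0(x,v)^{\beta/2}\big)+C .
\]

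Adding the two estimates and factoring out $\tfrac\beta2\mathcal W_0^{\beta/2-1}$, it remains to check that the expression
\[
(2\theta_0-\gamma\theta^*)\langle x,v\rangle+(\theta^*-2\gamma)|v|^2-\theta^*c^*|x|^2-\theta^*c^{**}U(x)-J(x,v)\big(|v|^2+\theta^*\langle x,v\rangle\big)
\]
is bounded above by $-c\big(1+|x|^2+|v|^2+U(x)\big)+C$ for suitable $c,C>0$, uniformly over $J(x,v)\in[\lambda_1,\lambda_2]$. Here the specific values $\theta_0=\tfrac14(\lambda_1+\gamma)^2$ and $\theta^*=\tfrac{(\lambda_1+\gamma)^2}{2(\lambda_2+\gamma)}$ enter decisively: they force $2\theta_0=(\gamma+\lambda_2)\theta^*$, so the $\langle x,v\rangle$-coefficient equals $(\lambda_2-J(x,v))\theta^*\in[0,(\lambda_2-\lambda_1)\theta^*]$ while the $|v|^2$-coefficient is at most $\theta^*-2\gamma-\lambda_1<0$. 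Since the expression is affine in $J(x,v)$, its largest value is attained at $J(x,v)=\lambda_1$ on $\{|v|^2+\theta^*\langle x,v\rangle\ge0\}$ and at $J(x,v)=\lambda_2$ otherwise; in the first (harder) case one is reduced to the quadratic form $-\theta^*c^*|x|^2+(\lambda_2-\lambda_1)\theta^*|x||v|-(2\gamma+\lambda_1-\theta^*)|v|^2$, which is negative definite exactly when $(\lambda_2-\lambda_1)^2\theta^*<4c^*(2\gamma+\lambda_1-\theta^*)$, and since $2\gamma+\lambda_1-\theta^*=\tfrac{2\lambda_1\lambda_2-\lambda_1^2+4\lambda_2\gamma+3\gamma^2}{2(\lambda_2+\gamma)}$ this is precisely the hypothesis $c^*>c_0^*$; in the second case the cross term vanishes, leaving $(\theta^*-2\gamma-\lambda_2)|v|^2-\theta^*c^*|x|^2$, negative definite because $\theta^*<2\gamma+\lambda_2$ and $c^*>c_0^*\ge0$. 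Negative-definiteness yields a bound $\le-\eta(|x|^2+|v|^2)-\theta^*c^{**}U(x)+C$, which together with $\mathcal W_0\le1+2U+C(|x|^2+|v|^2)$ (and $c^{**}>0$, resp.\ $U\asymp|x|^2$, to absorb $U$) is $\le-c_0'\mathcal W_0+C_0'$. Multiplying back by $\tfrac\beta2\mathcal W_0^{\beta/2-1}$ and absorbing the $o(\mathcal W_0^{\beta/2})$ and constant terms into the leading $-\tfrac{\beta c_0'}2\mathcal W_0^{\beta/2}$ gives \eqref{F9}; finally \eqref{EEE} is immediate from \eqref{E06} and $U\ge0$.

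The step I expect to be hardest is the collision estimate for heavy-tailed $\varphi$ with $\beta<2$: there $\mathscr L_2\mathcal W_0$ itself is undefined (infinite second moment), so the $u$-integral has to be localised with care so that the correction to the $\beta=2$ structure is genuinely a sub-$\beta/2$ power of $\mathcal W_0$ — only then is it absorbable into the leading dissipative term, however small the constant $c_0'$ is. By contrast, the quadratic-form bookkeeping of the preceding step is elementary, but it is precisely there that the otherwise opaque constants $\theta_0$, $\theta^*$ and the threshold $c_0^*$ get pinned down.
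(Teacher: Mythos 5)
Your proposal is substantively correct and follows the same architecture as the paper's proof: the same choice of $\mathcal W_0$, the same $\theta_0,\theta^*$, the same use of concavity of $t\mapsto t^{\beta/2}$ for the collision term, and a localisation of the $u$-integral to handle the possible lack of a second moment when $\beta<2$. You also arrive at the same final quadratic-form verification, and your derivation of the identity $2\theta_0=(\lambda_2+\gamma)\theta^*$ and of the discriminant condition is exactly what pins down the threshold $c_0^*$ in \eqref{S0}.

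The two proofs differ in two secondary ways. First, the localisation: the paper splits at a fixed threshold $|u|\le\rho$ and keeps track of the $\rho$-dependent quantities $\Theta_{1,\rho},\dots,\Theta_{4,\rho}$, showing they can be made small by choosing $\rho$ large; you instead use an $x$-dependent cutoff $R\asymp\mathcal W_0(x,0)^{1/2}$, which makes the tail correction $O(1)$ rather than $O(\Theta_{\rho}\,\mathcal W_0^{\beta/2})$. Both work, but a small caveat in your version: with $R\asymp\mathcal W_0(x,0)^{1/2}$ alone, the mass $c_{R,*}=\int_{\{|u|\le R\}}\varphi$ is \emph{not} uniformly close to $1$ on bounded $|x|$, so the dissipative coefficient $-c_{R,*}J(|v|^2+\theta^*\langle x,v\rangle)$ does not literally equal $-J(|v|^2+\theta^*\langle x,v\rangle)+o(\mathcal W_0^{\beta/2})$; you should take $R=\rho_0+\mathrm{const}\cdot\mathcal W_0(x,0)^{1/2}$ with $\rho_0$ large, or revert to a fixed large $\rho$ as in the paper, to make this claim clean. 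Second, the bookkeeping in $J$: the paper bounds the $\langle x,v\rangle$ and $|v|^2$ coefficients by their separate worst cases over $J\in[\lambda_1,\lambda_2]$, whereas you exploit the affine-in-$J$ structure to evaluate at the two endpoints according to the sign of $|v|^2+\theta^*\langle x,v\rangle$; your casewise argument is the sharper one and makes it clearer why the bottleneck is the $J=\lambda_1$ regime. You also explicitly flag that when $c^{**}=0$ the $U$-term can only be absorbed if $U\asymp|x|^2$; the paper's condition \eqref{F6} (the inequality $\Theta_{4,\rho}<c^{**}\theta^*$) quietly presupposes either $c^{**}>0$ or $\beta=2$, so your remark is a small but genuine refinement of the paper's write-up.
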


\begin{proof}
For any $\rho>0$, let
$$c_{\rho,*}=\int_{\{|u|\le \rho\}}\varphi(u)\,\d u,$$
and
\begin{align*}
\Theta_{1,\rho} &=(1-c_{\rho,*})\ll_2\theta^*\big(1 +2\theta_0^{{\bb}/{2}}(2/\bb-1) \big),\quad
\Theta_{2,\rho}=2(1-c_{\rho,*})\ll_2\theta_0^{{\bb}/{2}}
\big((2/\bb-1)  \theta_0+1\big),\\
\Theta_{3,\rho}&=(1-c_{\rho,*}) \ll_2\big( 1 +2 \theta_0^{{\bb}/{2}}(2/\bb-1)\big),\qquad
\Theta_{4,\rho} =4(1-c_{\rho,*})\ll_2\theta_0^{{\bb}/{2}}(2/\bb-1).
\end{align*}
Since $\varphi(u)\,\d u$ is a probability measure, in addition to $c^*>c_0^*$, defined in \eqref{S0}, there exists  a constant  $\rho>0$ sufficiently large such that
\begin{equation}\label{F6}
\Theta_{2,\rho}<c^*\theta^*,\qquad \Theta_{3,\rho}<\ll_1+2\gamma-\theta^*,\qquad \Theta_{4,\rho}<c^{**}\theta^*
\end{equation}
and
\begin{equation}\label{F7}
c^*\theta^*-\Theta_{2,\rho}>\frac{(C^{**}_0+\Theta_{1,\rho})^2}{4(\ll_1+2\gamma-\theta^*-\Theta_{3,\rho})}=:c_{0,\rho}^*,
\end{equation}
where
$C^{**}_0:=\frac{1}{2(\lambda_2+\gamma)}  (\lambda_1+\gamma)^2(\lambda_2-\lambda_1).$
Below, we shall choose $\rho>0$ large enough such that \eqref{F6} and \eqref{F7} hold simultaneously.

Obviously,  for all $x,v\in\R^d,$
\begin{equation}\label{WW}
\nn_x\mathcal W_0(x,v)=2\nn U(x)+2\theta_0x
+\theta^* v \quad \mbox{ and } \quad \nn_v\mathcal W_0(x,v)=  2 v +\theta^* x.
\end{equation} By the chain rule, for $\mathcal W(x,v)=\mathcal W_0(x,v)^{{\bb}/{2}}$,  it follows from \eqref{E0}, \eqref{E05} and \eqref{WW} that
\begin{align*}
\big(\mathscr L\mathcal W\big)(x,v)
&\le \frac{\bb}{2}\mathcal W_0(x,v)^{^{\frac{\bb-2}{2}}}\big((2\theta_0-\theta^*\gamma)\<x,v\>-(2\gamma-\theta^*)|v|^2-c^*\theta^*|x|^2-c^{**}\theta^*U(x)
+\theta^*C_0^*\big)\\
&\quad +J(x,v) \int_{\{|u|\le \rho\}}\big(\mathcal W_0(x,u)^{{\bb}/{2}}-\mathcal W_0(x,v)^{{\bb}/{2}}\big)\varphi(u)\,\d u\\
&\quad+J(x,v) \int_{\{|u|>\rho\}}\big(\mathcal W_0(x,u)^{{\bb}/{2}}-\mathcal W_0(x,v)^{{\bb}/{2}}\big)\varphi(u)\,\d u\\
&=:\Upsilon_1(x,v)+J(x,v)\Upsilon_2(x,v)+J(x,v)\Upsilon_3(x,v).
\end{align*}
Since the function $x\mapsto x^{{\bb}/{2}}$ with $\bb\in(0,2]$ is concave on $[0,\infty)$,  the mean value theorem enables us to obtain that
\begin{equation}\label{F2}
\begin{split}
\Upsilon_2(x,v)&\le \frac{\bb}{2}\mathcal W_0(x,v)^{^{\frac{\bb-2}{2}}}\int_{\{|u|\le \rho\}}\big(\mathcal W_0(x,u)-\mathcal W_0(x,v)\big)\varphi(u)\,\d u\\
&\le\frac{\bb}{2}\rho^2-\frac{\bb c_{\rho,*}}{2}\mathcal W_0(x,v)^{^{\frac{\bb-2}{2}}}(|v|^2+\theta^*\<x,v\>),
\end{split}
\end{equation}
where in the second inequality we utilized the fact that $\varphi(\cdot)$ is
a radial function and meanwhile  used $\mathcal W_0\ge1$ and $\bb\in(0,2]$.

On the other hand, by the basic inequality $(a+b)^{\ell}\le a^{\ell}+b^{\ell}$ for all $a,b\ge0$ and $\ell\in(0,1],$   we deduce from
\eqref{E06} and $\bb\in(0,2] $ that
\begin{equation}\label{FF*}
\begin{split}
&\Upsilon_3(x,v)\\
&\le \int_{\{|u|>\rho\}}\big((1+2U(x))^{{\bb}/{2}}+(\theta_0|x|^2+ |u|^2+\theta^*\<x,u\>)^{{\bb}/{2}}-(1+2U(x))^{{\bb}/{2}}\big)\varphi(u)\,\d u\\
&\le (\theta_0|x|^2)^{{\bb}/{2}}(1-c_{\rho,*})  +\int_{\{|u|>\rho\}} |u|^\bb\varphi(u)\,\d u+(\theta^*|x|)^{{\bb}/{2}}\int_{\{|u|>\rho\}}|u|^{{\bb}/{2}} \varphi(u)\,\d u\\
&\le (\theta_0|x|^2)^{{\bb}/{2}}(1-c_{\rho,*})  +\int_{\{|u|>\rho\}} |u|^\bb\varphi(u)\,\d u+(\theta^*|x|)^{{\bb}/{2}}\bigg(\int_{\{|u|>\rho\}}|u|^{\bb} \varphi(u)\,\d u\bigg)^{{1}/{2}}\\
&\le 2(\theta_0|x|^2)^{{\bb}/{2}}(1-c_{\rho,*}) +C_{\rho,*}
\end{split}
\end{equation} with \begin{equation}\label{F*}
C_{\rho,*}:=\bigg(1+\frac{\big(\theta^*/\theta_0^{{1}/{2}}\big)^\bb}{4 (1-c_{\rho,*})}\bigg)\int_{\{|u|>\rho\}} |u|^\bb\varphi(u)\,\d u,
\end{equation}
where in    the third inequality we employed   Jensen's inequality and in the last inequality we exploited Young's inequality. Again, via
Young's inequality, for $\bb\in(0,2]$ we arrive at
\begin{align*}
|x|^\bb=\mathcal W_0(x,v)^{\frac{\bb-2}{2}}\mathcal W_0(x,v)^{\frac{2-\bb}{2}}|x|^\bb
\le  \frac{\bb}{2}\mathcal W_0(x,v)^{\frac{\bb-2}{2}}\big((2/\bb- 1)\mathcal W_0(x,v)+|x|^2\big).
\end{align*}
Plugging this back into \eqref{FF*} yields
\begin{equation}\label{F5}
\begin{split}
\Upsilon_3(x,v)
&\le \frac{\bb}{2}\mathcal W_0(x,v)^{\frac{\bb-2}{2}}\theta_0^{{\bb}/{2}}(1-c_{\rho,*})\big((4/\bb-2)\mathcal W_0(x,v)+2|x|^2\big)+C_{\rho,*}.
\end{split}
\end{equation}

Now, with the help of \eqref{F2} and \eqref{F5} and by taking the expression of $\mathcal W_0$, given in \eqref{2D},  we deduce from  $\mathcal W_0\ge1$ and $\bb\in(0,2]$ that
\begin{align*}
&\big(\mathscr L\mathcal W\big)(x,v)\\
&\le \frac{\bb}{2}\mathcal W_0(x,v)^{^{\frac{\bb-2}{2}}}\big((2\theta_0-\theta^*\gamma)\<x,v\>-(2\gamma-\theta^*)|v|^2-c^*\theta^*|x|^2-c^{**}\theta^*U(x)
\big)\\
&\quad + \frac{\bb}{2}\mathcal W_0(x,v)^{\frac{\bb-2}{2}}J(x,v)\\
&\qquad\times \Big\{- c_{\rho,*}(|v|^2+\theta^*\<x,v\>)+2\theta_0^{{\bb}/{2}}(1-c_{\rho,*})\big((2/\bb-1) (2U(x)+\theta_0|x|^2+ |v|^2+\theta^*\<x,v\>)+|x|^2\big)\Big\}\\
&\quad+C_{\rho,**}\\
&\le \frac{\bb}{2}\mathcal W_0(x,v)^{^{\frac{\bb-2}{2}}}\Big\{-\big(c^{**}\theta^*-\Theta_{4,\rho}\big)  U(x)-\big(c^*\theta^*-\Theta_{2,\rho}\big)|x|^2-\big(\ll_1+2\gamma-\theta^*- \Theta_{3,\rho}\big)|v|^2\\
&\quad\quad\qquad\qquad\qquad+\big(C^*_0(x,v)+\Xi(x,v)\big)\<x,v\>\Big\}+C_{\rho,**},
\end{align*}
where $C^*_0(x,v):=2\theta_0-\theta^*(J(x,v)+\gamma),$
$
\Xi(x,v):=(1-c_{\rho,*})J(x,v)\theta^*\big(1 +2\theta_0^{{\bb}/{2}}(2/\bb-1) \big)$ and $C_{\rho,**}:=\theta^*C_0^*+\ll_2\rho^2+\ll_2C_{\rho,*}+2\ll_2 \theta_0^{{\bb}/{2}}\big(2/\bb-1\big)
$
with $C_{\rho,*}$ being introduced in \eqref{F*}.

Note that \begin{align*}0&\le  C^*_0(x,v)+\Xi(x,v)\le 2\theta_0-\theta^*(\lambda_1+\gamma)+(1-c_{\rho,*})\lambda_2\theta^*\big(1 +2\theta_0^{{\bb}/{2}}(2/\bb-1) \big)\\
&= C^{**}_0+\Theta_{1,\rho}\\
&\le \frac{1}{2}\big(c^*\theta^*-\Theta_{2,\rho}+c_{0,\rho}^*\big)+\big(\ll_1+2\gamma-\theta^*- \Theta_{3,\rho}\big)\frac{2c_{0,\rho}^*}{c^*\theta^*-\Theta_{2,\rho}+c_{0,\rho}^*},\end{align*} where $c_{0,\rho}^*>0$ was defined by \eqref{F7}.
This  yields that
\begin{align*}
\big(\mathscr L\mathcal W\big)(x,v)
&\le\frac{\bb}{2}\mathcal W_0(x,v)^{^{\frac{\bb-2}{2}}}\bigg\{-\big(c^{**}\theta^*-\Theta_{4,\rho}\big)  U(x)-\frac{1}{2}\big(c^*\theta^*-\Theta_{2,\rho}-c_{0,\rho}^*\big)|x|^2\\
&\quad\quad\qquad\qquad\qquad-(\ll_1+2\gamma-\theta^*- \Theta_{3,\rho})\frac{c^*\theta^*-\Theta_{2,\rho}-c_{0,\rho}^*}{c^*\theta^*-\Theta_{2,\rho}+c_{0,\rho}^*}|v|^2\bigg\}+C_{\rho,**},
\end{align*}
 Consequently, \eqref{F9} follows by combining \eqref{E06}
with \eqref{F6} and  \eqref{F7}.
\end{proof}

\begin{proposition}\label{Pro:1.6}
Assume that the function $r\mapsto\varphi(r)$ is bounded and decreasing on $(0,\infty)$ so that $\varphi(r)>0$ for all $r>0$ and
$\int_{\R^d}|u|^\bb\varphi(u)\, \d u<\8$ for some $\bb\in(0,2]$, and that there exists a constant $c_0^{**}>0$ such that for all $\xi\in\R^d$,
\begin{equation}\label{0D0}
\int_{\R^d}|u|^\bb\Psi_{\xi}(u)\, \d u\le c_0^{**}|\xi| .
\end{equation}
Then,  Assumption $({\bf B}_2)$ holds for $\mathcal W(x,v)=\mathcal W_0(x,v)^{\bb/2}$,
 where $\mathcal W_0$ was defined in \eqref{2D}.
\end{proposition}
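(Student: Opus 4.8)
The plan is to verify both inequalities of $({\bf B_2})$ directly for $\mathcal W=\mathcal W_0^{\bb/2}$ by producing a \emph{product-form} upper bound for $\mathcal W_0(x,u)$ of the shape $\Phi(x)\,(1+c|u|^2)$ with $\Phi(x)$ comparable to $\inf_{v}\mathcal W_0(x,v)$, and then exploiting $\bb\in(0,2]$: raising to the power $\bb/2$ turns this into $\mathcal W(x,u)\le C\,\big(\inf_{v}\mathcal W(x,v)\big)(1+c'|u|^\bb)$, after which integrating against $\varphi(u)\,\d u$ and against $\Psi_\xi(u)\,\d u$ yields the two claims.

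First I would compute the velocity infimum: as a quadratic in $v$, $\mathcal W_0(x,\cdot)$ attains its minimum at $v=-\tfrac{\theta^*}{2}x$, whence
\begin{equation*}
\inf_{v\in\R^d}\mathcal W_0(x,v)=1+2U(x)+\Big(\theta_0-\frac{(\theta^*)^2}{4}\Big)|x|^2 ,
\end{equation*}
and, since $(\theta^*)^2\le\theta_0$ (the display preceding Proposition~\ref{pro4}), the coefficient $\theta_0-(\theta^*)^2/4$ is at least $\tfrac34\theta_0>0$; in particular $\inf_v\mathcal W_0(x,v)$ is comparable to $1+U(x)+|x|^2$, just as in \eqref{E06}. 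Next, bounding the cross term by Young's inequality, $\theta^*\<x,u\>\le\tfrac{\theta^*}{2}(|x|^2+|u|^2)$, and factorising,
\begin{equation*}
\mathcal W_0(x,u)\le\big(1+2U(x)+(\theta_0+\tfrac{\theta^*}{2})|x|^2\big)\big(1+(1+\tfrac{\theta^*}{2})|u|^2\big),\qquad x,u\in\R^d .
\end{equation*}
Because $U\ge0$ and $(\theta_0+\tfrac{\theta^*}{2})/(\theta_0-\tfrac{(\theta^*)^2}{4})\ge1$, the first factor is at most $C_1\inf_{v}\mathcal W_0(x,v)$ with $C_1:=(\theta_0+\tfrac{\theta^*}{2})/(\theta_0-\tfrac{(\theta^*)^2}{4})$.

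Raising both sides to the power $\bb/2\le1$ and using $(ab)^{\bb/2}=a^{\bb/2}b^{\bb/2}$ and $(a+b)^{\bb/2}\le a^{\bb/2}+b^{\bb/2}$ then gives
\begin{equation*}
\mathcal W(x,u)\le C_1^{\bb/2}\Big(\inf_{v\in\R^d}\mathcal W(x,v)\Big)\big(1+(1+\tfrac{\theta^*}{2})^{\bb/2}|u|^{\bb}\big).
\end{equation*}
Integrating this against $\varphi(u)\,\d u$ and invoking the moment assumption $\int_{\R^d}|u|^\bb\varphi(u)\,\d u<\8$ yields the first inequality of $({\bf B_2})$. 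Integrating it against $\Psi_\xi(u)\,\d u$ and invoking, on the one hand, $\int_{\R^d}\Psi_\xi(u)\,\d u=1-\int_{\R^d}\psi_\xi(u)\,\d u\le c\,|\xi|$ — which is precisely the linear-in-$|\xi|$ estimate obtained from the monotonicity of $\varphi$ in the proof of Proposition~\ref{pro} — and, on the other hand, the standing hypothesis \eqref{0D0}, $\int_{\R^d}|u|^\bb\Psi_\xi(u)\,\d u\le c_0^{**}|\xi|$, yields the second inequality of $({\bf B_2})$; one then takes $c^{**}$ to be the larger of the two constants thus produced.

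There is no genuine obstacle here: the only steps requiring care are the correct identification of $\inf_v\mathcal W_0(x,v)$ and tying the $x$-dependent factor in the product bound back to it — the one place where the relation $(\theta^*)^2\le\theta_0$ is used — together with the elementary use of $\bb/2\le1$ to distribute the power over a sum. The tail-type bound $\int_{\R^d}\Psi_\xi(u)\,\d u\le c|\xi|$, which could otherwise be delicate, is already available from the earlier Proposition~\ref{pro}.
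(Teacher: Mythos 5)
Your proof is correct. The key ingredients match the paper's, but your organization is a genuinely different (and arguably cleaner) decomposition, so a brief comparison is warranted. The paper raises $\mathcal W_0(x,u)$ to the power $\beta/2$ first and then distributes via subadditivity of $t\mapsto t^{\beta/2}$ into the three pieces $(1+2U(x)+\theta_0|x|^2)^{\beta/2}+|u|^\beta+(\theta^*|x|)^{\beta/2}|u|^{\beta/2}$; this forces a Jensen step to reduce $\int|u|^{\beta/2}\varphi$ to the given $\beta$-moment, a Young step to absorb the $(\theta^*|x|)^{\beta/2}$ factor, and a further H\"older step for the $\Psi_\xi$-integral, after which the quadratic-form comparison \eqref{E06} is invoked to replace $(1+2U(x)+|x|^2)^{\beta/2}$ by $\inf_v\mathcal W(x,v)$. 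You instead factorize $\mathcal W_0(x,u)\le\Phi(x)\,\bigl(1+c|u|^2\bigr)$ \emph{before} taking the power $\beta/2$, compute $\inf_v\mathcal W_0(x,v)=1+2U(x)+(\theta_0-(\theta^*)^2/4)|x|^2$ in closed form (a strictly convex quadratic in $v$, with positivity of the coefficient guaranteed by $(\theta^*)^2\le\theta_0$), and compare $\Phi(x)$ directly to that infimum by a ratio of coefficients $C_1\ge 1$. Then $(ab)^{\beta/2}=a^{\beta/2}b^{\beta/2}$ together with $(1+t)^{\beta/2}\le 1+t^{\beta/2}$ yields $\mathcal W(x,u)\le C\,(\inf_v\mathcal W(x,v))\,(1+c'|u|^\beta)$ in one stroke, so integration against $\varphi(u)\,\d u$ or $\Psi_\xi(u)\,\d u$ reduces immediately to the finite $\beta$-moment, to \eqref{0D0}, and to the $|\xi|$-linear bound on $\int_{\R^d}\Psi_\xi(u)\,\d u$ already established in the proof of Proposition~\ref{pro}. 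What this buys you is the elimination of the auxiliary $\beta/2$-moment terms and the attendant Jensen/H\"older manipulations; both arguments, however, rest on the same two pillars — a $u$-separated upper bound on $\mathcal W(x,u)$ controlled by $\inf_v\mathcal W(x,v)$, and $\beta/2\le 1$ to distribute powers over sums.
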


\begin{proof}
Via the inequality that $(a+b+c)^{\ell}\le a^{\ell}+b^{\ell}+c^\ell\le 3(a+b+c)^\ell$ for all $a,b,c\ge0$ and $\ell\in(0,1],$ we infer from Jensen's inequality and Young's inequality that
\begin{align*}
\int_{\R^d}\mathcal W(x,u)\varphi(u)\, \d u
&\le \big(1+2U(x)+\theta_0|x|^2\big)^{\bb/2}+\int_{\R^d}|u|^\bb\varphi(u)\, \d u+(\theta^*|x|)^{\bb/2}\int_{\R^d}|u|^{\bb/2}\varphi(u)\, \d u\\
&\le \big(1+2U(x)+\theta_0|x|^2\big)^{\bb/2} +(\theta_0|x|^2)^{\bb/2}+C^{**}\\
&\le 3\Big(1+(C^{**})^{2/\bb}\vee(2\theta_0)\Big)^{\bb/2}\big(1+2U(x)+ |x|^2\big)^{\bb/2},
\end{align*}
where
\begin{equation*}
C^{**}:=\bigg(1+\frac{1}{4} \big(\theta^*/\theta_0^{{1}/{2}}\big)^\bb\bigg)\int_{\R^d}|u|^{\bb} \varphi(u)\,\d u<\8
\end{equation*}
by taking $\int_{\R^d}|u|^{\bb} \varphi(u)\,\d u<\8$ into consideration.
Thus, we arrive at
\begin{align*}
\int_{\R^d}\mathcal W(x,u)\varphi(u)\, \d u
&\le 3\Big(1+(C^{**})^{2/\bb}\vee(2\theta_0)\Big)^{\bb/2}\inf_{v\in\R^d}\big(1+2U(x)+ |x|^2+|v|^2\big)^{\bb/2}.
\end{align*}
Therefore, the first inequality in \eqref{e11} holds true for $\mathcal W$.

Next, by taking advantage of the inequality that $(a+b)^{\ell}\le a^{\ell}+b^{\ell}$ for all $a,b\ge0$ and $\ell\in(0,1],$ and by invoking H\"older's inequality, we find that
\begin{align*}
\int_{\R^d}\mathcal W(x,u) \Psi_{\xi}(u)\, \d u
&\le \big(1+2U(x)+\theta_0|x|^2\big)^{\bb/2}\int_{\R^d}\Psi_{\xi}(u)\, \d u+\int_{\R^d}|u|^\bb\Psi_{\xi}(u)\, \d u\\
&\quad+(\theta^*|x|)^{\bb/2}\bigg(\int_{\R^d}|u|^\bb\Psi_{\xi}(u)\, \d u\bigg)^{{1}/{2}}\bigg(\int_{\R^d} \Psi_{\xi}(u)\, \d u\bigg)^{{1}/{2}}.
\end{align*}
Then, applying Proposition \ref{pro} and taking \eqref{0D0} into account yield that for some constant $C_{**}>0$,
\begin{equation*}
\int_{\R^d}\mathcal W(x,u) \Psi_{\xi}(u)\, \d u
\le C_{**}\big(1+2U(x)+ |x|^2\big)^{\bb/2}|\xi|.
\end{equation*}
Consequently, the second inequality in \eqref{e11} is proved thanks to \eqref{E06} again.

By summing up the previous analysis, we make a conclusion that Assumption $({\bf B}_2)$ is  provable for $\mathcal W(x,v)=\mathcal W_0(x,v)^{\bb/2}$.
\end{proof}

Examples for the probability density function $\varphi$ that satisfies all the assumptions in Propositions \ref{pro4} and \ref{Pro:1.6} are
$\varphi(x)=\varphi_1(x):=c_{d,\beta_1} (1+|x|)^{-d-\beta_1}$ with $\beta_1>0$ and $c_{d,\beta_1}>0$ or $\varphi(x)=\varphi_2(x):= c_{d,\beta_2} \exp (-|x|^{\beta_2})$ with $\beta_2>0$ and $c_{d,\beta_2}>0$.

\ \

Finally, we intend to  validate  the condition \eqref{S6}.

\begin{proposition}
Assume $U(x)=\theta|x|^2$ for  all $\theta>0$ and $x\in\R^d.$ Then, the inequality \eqref{S6} is solvable in case of $\gamma\ge2\ss{2\theta}$.
\end{proposition}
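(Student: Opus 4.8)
The plan is to exploit that a quadratic potential has a \emph{linear} gradient, so that the combination appearing in $({\bf H_0})$ degenerates and a single well-chosen value of $\bb$ trivialises \eqref{S6}. First I would compute $\nn U(x)=2\theta x$, so that for all $x,x'\in\R^d$,
\[
\bb(x-x')+\nn U(x')-\nn U(x)=(\bb-2\theta)(x-x'),
\]
and hence $|\bb(x-x')+\nn U(x')-\nn U(x)|=|\bb-2\theta|\,|x-x'|$. Thus $({\bf H_0})$ holds, and one may take $K_{\bb,U}=|\bb-2\theta|$ when $\bb\neq2\theta$, whereas for $\bb=2\theta$ the left-hand side vanishes identically, so that $K_{2\theta,U}$ may be taken to be \emph{any} strictly positive number.

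The key step is then the choice $\bb=2\theta$. Since $\theta>0$ this $\bb$ is positive, and the hypothesis $\gamma\ge2\ss{2\theta}$ is equivalent to $\gamma^2\ge8\theta$, i.e. $2\theta\le\gamma^2/4$; therefore $\bb=2\theta\in(0,\gamma^2/4]$. To see that \eqref{S6} holds for this $\bb$, recall that the optimal Lipschitz constant above is $0$, so we are free to set $K_{2\theta,U}:=\theta/2>0$; then $\bb=2\theta=4\cdot(\theta/2)=4K_{2\theta,U}$, so that $\bb\ge4K_{\bb,U}$ is satisfied (with equality). Consequently \eqref{S6} is solvable in the interval $(0,\gamma^2/4]$, which is precisely the assertion.

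I do not expect any genuine obstacle here: the whole argument is a direct verification, the only point worth recording being that $({\bf H_0})$ demands a \emph{strictly} positive constant $K_{\bb,U}$, which is why one fixes $K_{2\theta,U}=\theta/2$ rather than $0$ (any value in $(0,\theta/2]$ would serve equally well). I would also remark in passing that, by instead choosing $\bb$ close to $8\theta/5$, the range of admissible $\theta$ can be slightly enlarged, but the bound $\gamma\ge2\ss{2\theta}$ stated here is the cleanest and is the one used in Example \ref{Example}.
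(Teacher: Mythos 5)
Your argument is correct and takes a genuinely different, more elementary route than the paper's. The paper parametrises $\bb=\alpha(\gamma-\alpha)$ and reduces \eqref{S6} to the pair of quadratic inequalities \eqref{S18} and \eqref{S19} in $\alpha$, then checks that these have a common solution when $\gamma\ge 2\ss{2\theta}$; this keeps the exact Lipschitz constant $K_{\bb,U}=2\theta-\bb$ strictly positive. You instead jump directly to $\bb=2\theta$, note that the map $x\mapsto \bb x-\nn U(x)$ then vanishes identically so that $({\bf H_0})$ holds with \emph{any} positive constant, and fix $K_{2\theta,U}=\theta/2$ to saturate \eqref{S6}. This is a clean simplification: no quadratic in $\alpha$ needs to be solved, and the containment $\bb=2\theta\in(0,\gamma^2/4]$ is exactly the stated hypothesis $\gamma\ge 2\ss{2\theta}$. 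The one point worth being explicit about — and you are — is that this reading hinges on $K_{\bb,U}$ being an existentially quantified constant in $({\bf H_0})$, not the optimal Lipschitz constant (which is $0$ at $\bb=2\theta$ and would fail the strict positivity requirement). Your closing remark is also accurate: taking $\bb$ near $8\theta/5$, where $\bb\ge 4|\bb-2\theta|$ first becomes true, only needs $8\theta/5\le\gamma^2/4$, i.e.\ $\gamma\ge\ss{32\theta/5}$, so the paper's threshold $2\ss{2\theta}$ is sufficient but not sharp.
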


 \begin{proof}
 Due to $ \gamma\ge 2\ss{2\theta}$, for any
$$\alpha\in\Big(0, \Big(\gamma-\ss{\gamma^2-8\theta}\Big)/2\Big)\bigcup\Big( \Big(\gamma+\ss{\gamma^2-8\theta}\Big)/2,+\8\Big),$$
we have $\alpha(\gamma-\alpha)\le2\theta$. On the other hand, for all
\begin{equation*}
\frac{1}{2}\Big(\gamma-\ss{\gamma^2-32\theta/5}\Big)<\alpha<\frac{1}{2}\Big(\gamma+\ss{\gamma^2-32\theta/5}\Big)
\end{equation*}
provided  $\gamma\ge\ss{32\theta/5}$, it holds that
$
\alpha^2-\gamma\alpha+ 8\theta/5\le0.
$
Hence, for  $\gamma\ge 2\ss{2\theta}$, we find that there exists  an $\alpha>0$ satisfying simultaneously
\begin{equation}\label{S18}
\alpha(\gamma-\alpha)\le2\theta \end{equation}
and
\begin{equation}\label{S19}
5\alpha^2-5\gamma\alpha+8\theta\le0 .
\end{equation}

Thanks to  $U(x)=\theta|x|^2$, $x\in\R^d,$ we obviously obtain that   for all $x,x'\in\R^d,$
\begin{equation*}
\nn U(x)-\nn U(x')=2\theta(x-x').
\end{equation*}
Thus, for $\bb:=\alpha(\gamma-\alpha)\le 2\theta$ due to \eqref{S18}, we deduce that  for all $x,x'\in\R^d,$
\begin{equation*}
|\bb(x-x')+\nn U(x' )-\nn U(x )|=(2\theta-\beta)|x-x'|.
\end{equation*}
Therefore, Assumption (${\bf H_0}$) is satisfied for $K_{\beta,U}=2\theta-\beta$.

Next, it is clear that $\beta\le 2\theta\le \gamma^2/4$.
On the other hand, with \eqref{S19} at hand, we find that for $\alpha>0$ solving \eqref{S18} and \eqref{S19},
\begin{equation*}
\beta=\gamma\alpha-\alpha^2\ge 4(2\theta-\alpha(\gamma-\alpha))=4K_{\alpha(\gamma-\alpha),U}=4K_{\beta,U}
\end{equation*}
by recalling $K_{\beta,U}=2\theta-\beta$ with $\beta=\alpha(\gamma-\alpha)$. Consequently, we can reach  a conclusion that the inequality \eqref{S6} is solvable.
\end{proof}

With all the propositions above at hand, we can easily verify Example \ref{Example}, and so the detail is omitted here to save space.

\ \

\noindent{\bf Acknowledgement}\,\, The research of Jianhai Bao is
supported by the National Natural Science Foundation of China (Nos.\ 12071340 and
11831014).
The research of Jian Wang is supported
by the National Natural Science Foundation of China (Nos.\ 11831014
and 12071076), and the Education and Research Support Program for Fujian Provincial Agencies.

\end{document}